\documentclass[abstract=on,headings=small,11pt]{scrartcl}

\usepackage[utf8]{inputenc}
\usepackage[T1]{fontenc}
\usepackage{graphicx}
\usepackage[final]{microtype}
\usepackage{longtable}
\usepackage{wrapfig}

\usepackage{mathtools}
\newcommand{\N}{\ensuremath{\mathbf{N}}}
\newcommand{\Z}{\ensuremath{\mathbf{Z}}}
\newcommand{\R}{\ensuremath{\mathbf{R}}}

\DeclareMathOperator{\id}{id}

\usepackage[paperwidth=165mm, paperheight=242mm, left=22.5mm, right=22.5mm, top=20mm, bottom=20mm]{geometry}

\usepackage{textcomp}
\usepackage{amssymb}
\usepackage{lmodern}
\usepackage{amsthm}
\usepackage{enumitem}
\PassOptionsToPackage{hyperref}{hidelinks}
\usepackage{csquotes}
\usepackage[british]{babel}
\setcounter{secnumdepth}{1}
\usepackage[backend=biber,doi=false,isbn=false,url=false,maxbibnames=9,maxcitenames=2]{biblatex}
\bibliography{main}

\usepackage{pgfplots}
\usepackage{tikz}
\usetikzlibrary{patterns,tikzmark,calc}
\pgfplotsset{%
  ,compat=1.12
  ,every axis x label/.style={at={(current axis.right of origin)},anchor=north west}
  ,every axis y label/.style={at={(current axis.above origin)},anchor=north east}
}
\usepackage{tikz-cd}

\newtheorem{theorem}{Theorem}
\newtheorem{definition}{Definition}
\newtheorem{corollary}{Corollary}
\newtheorem{lemma}{Lemma}

\MakeAutoQuote{«}{»} 
\MakeAutoQuote*{‹}{›} 

\DeclareMathOperator{\Prob}{\mathbb{P}}
\DeclareMathOperator{\Expect}{\mathbb{E}}
\DeclareMathOperator{\PBin}{PBin}
\DeclareMathOperator{\Bin}{Bin}
\DeclareMathOperator{\Mult}{Mult}

\title{Information in additional observations of a non-parametric experiment
  that is not estimable}
\author{Tilo Wiklund}
\date{\vspace{-1em}}

\begin{document}

\maketitle{}

\begin{abstract}
  Given \(n\) independent and identically distributed observations and measuring
  the value of obtaining an additional observation in terms of Le~Cam's notion
  of deficiency between experiments, we show for certain types of non-parametric
  experiments that the value of an additional observation decreases at a rate of
  \(1/\sqrt{n}\). This is distinct from the known typical decrease at a rate of
  \(1/n\) for parametric experiments and the non-decreasing value in the case of
  very large experiments. In particular, the rate of \(1/\sqrt{n}\) holds for the
  experiment given by observing samples from a density about which we know only
  that it is bounded from below by some fixed constant. Thus there exists an
  experiment where the value of additional observations tends to zero but for
  which no estimator that is consistent (in total variation distance) exists.
\end{abstract}

\section{Introduction}

Much of traditional statistics is concerned with studying the performance of
statistical procedures or the difficulty of statistical decision problems as a
function of the number of observed independent repetitions of some experiment.
Considering only the repetition of the experiment, without reference to a
particular decision problem, leads one to study the deficiency of
Le~Cam\cite{LeCam1986} between experiments differing only in the number of
independent repetitions. Roughly speaking, this corresponds to measuring the
difference in risks with respect to the decision problem that makes this
difference the greatest. This is contrary to the more common issue of determining how difficult a particular decision problem is, for example by establishing minimax bounds on risk, or determining the performance of a particular decision procedure or sequence of decision procedures. Rather, we are interested in the «totality» of information in the experiment. The rate at which this information changes tells us something of the inherent complexity of the experiment as well as indicating at what point further repetitions of an experiment yield diminishing returns.

Fairly little appears to be known concerning the possible rates at which the value, in the above sense, of having one more observation changes. Helgeland\cite{Helgeland1982} and Mammen\cite{Mammen1986} have
shown that for experiments that are, in an appropriate sense, finite-dimensional,
the value decreases at a rate of \(1/n\), where \(n\) is the number of
observations. The result of Mammen covers, in particular, finite-dimensional
exponential families. It is also known that when the parameter set is finite, the value of additional observations decreases at an exponential rate\cite{Torgersen1981}.

On the other hand, it is not too difficult to construct experiments where this
difference does not decrease at all. This means that for every \(n\) there is
some question that was impossible to solve based on \(n\) observations that
becomes trivial based on \(n+1\) observations. Examples of such experiments
include the experiment given by observing a completely unknown
density on the unit interval or the experiment given by observing a uniformly
chosen element from an unknown finite subset of a fixed infinite set.

The strategy used by both Helgeland and Mammen assumes the existence of a
sufficiently good estimator of the underlying, unknown, probability measure from
which samples are drawn. An upper bound on the deficiency is constructed from
the fact that it is then difficult to tell the difference between receiving a
genuine sequence of \(n+1\) independent observations as opposed to receiving
\(n+1\) observations out of which \(n\) are genuine independent and one, in a
random position, is a synthetic value sampled according to this \emph{estimate}
of the underlying distribution. If the estimator is sufficiently good, the bound
decays on the order \(1/n\). This bound was recently used by the author to study
the information content that is lost when re-sampling\cite{Wiklund2018}.

We will here consider what happens if this strategy is applied using the trivial
estimator that always estimates the underlying probability measure to be some
single, fixed, measure. Under some circumstances, this turns out to be enough to get
an upper bound of order \(1/\sqrt{n}\). Interestingly, this is precisely the
factor by which the technique of Helgeland and Mammen improves the bound
compared to a previous result of Le~Cam\cite{LeCam1974}.

We prove also that the rate \(1/\sqrt{n}\) appears as a lower bound on the
deficiency for experiments defined by observing samples according to an unknown
measure from a class of potential densities that is very large in a sense
similar to the \emph{rich} classes of
Devroye~\&~Lugosi\cite[Section~15.3]{Devroye2001}. In Devroye~\&~Lugosi these
classes appear as examples for which no consistent estimator, in total variation
distance, can exist. There is thus a certain parallel to the technique used to
establish the upper bound, where the technique requires the existence of a
non-trivial estimator to establish a faster rate.

Taking the class of all densities on the unit interval bounded from below by
\(1/r\) for some \(r > 1\) defines such an appropriately large class of
densities. That is to say, there cannot exist a consistent estimator and our
result gives a lower bound on the order of \(1/\sqrt{n}\). The experiment is
also within the purview of the result establishing our upper bound. For this
experiment, the rate of decay is therefore on the order of \(1/\sqrt{n}\).

To our knowledge, this is the first time a non-trivial rate other than \(1/n\) has been established. Since we are dealing with, in some sense, the «total information» in the experiment, one might have an intuition that a rate tending to zero should be linked to the existence of good estimator of the underlying measure. The above example contradicts this intuition, at least if good estimator is taken to mean an estimator consistent in total variation distance.

The structure of the paper is as follows. After settling on some notation in
Section~\ref{sec:notation}, the two main bounds and their application to the
example experiment discussed above are given in Section~\ref{sec:main}. In
Sections~\ref{sec:upperbound}~and~\ref{sec:lowerbound} the proofs of the upper
(Theorem~\ref{thm:mainupper}) and lower (Theorem~\ref{thm:mainlower}) bounds,
respectively, are given. Some technical proofs and results of an auxiliary
nature are postponed until Section~\ref{sec:proofs}.
Appendix~\ref{sec:basicfacts} recalls some basic facts necessary to follow the
proofs.

\section{Notation and Terminology}
\label{sec:notation}

This section outlines conventions for notation used throughout the paper. Basic
facts and definitions concerning these objects, that we may use without
mention, are collected in Appendix~\ref{sec:basicfacts}.

Throughout, let \(\N = \{1, \dotsc\}\), \(\Z\), and \(\R\) denote the (positive)
natural numbers, integers, and real numbers, respectively, with \(\R_{+} = \{ x
\in \R \mid x \geq 0\}\) the non-negative real numbers. For any natural number
\(n \in \N\) we will let \(\Delta_{n}\) denote the standard \((n-1)\)-simplex
given by \(\Delta_{n} = \{ (x_{1}, \dotsc, x_{n}) \in \R_{+}^{n} \mid x_{1} +
\dotsb + x_{n} = 1\}\). For any \(n \in \N\) and \((p_{1}, \dotsc, p_{n}) \in
\Delta_{n}\) we will have \(\PBin(p_{1}, \dotsc, p_{n})\) refer to the
Poisson-binomial distribution, that is to say the law of \(I_{1} + \dotsb +
I_{n}\) for \(I_{1}, \dotsc, I_{n}\) independent Bernoulli with \(\Prob(I_{i} =
1) = p_{i}\). Moreover, \(\Bin(n, p) = \PBin(p, \dotsc, p)\) for \(p \in [0,
1]\) and \(\Mult(n, p_{1}, \dotsc, p_{n})\) for \((p_{1}, \dotsc, p_{n}) \in
\Delta_{n}\) will denote, respectively, the Binomial and Multinomial
distributions.

Given a probability measure \(P\), we will use a slight abuse of notation by
writing the distribution function \(P((\infty, x])\) as \(\Prob(P \leq x)\) and
\(P((\infty, x))\) as \(\Prob(P < x)\). For example, \(\Prob(\PBin(p_{1}, \dotsc,
p_{k}) < l)\) is the probability of any random variable following the
Poisson-binomial law with parameters \(p_{1}, \dotsc, p_{k}\) being strictly
smaller than \(l\). Similarly, we will use the notation \(\Prob(P = l)\) and
even \(\Prob(P \in A)\) for the probabilities \(P(\{l\})\) and \(P(A)\), in case
it aids readability. That is to say, we will, for example, write \(\Prob(\Bin(n,p) = l)\) for the
probability of a Binomially distributed quantity with parameters \(n\) and \(p\)
being equal to \(l\).

An experiment \(\mathcal{E}\) on sample space \(X\), with parameter space
\(\Theta\), and \(\Theta\)-indexed family of measures \((P_{\theta})_{\theta \in
  \Theta}\) will be compactly written as \(\mathcal{E} = (X, P_{\theta} ; \theta
\in \Theta)\). The restriction of \(\mathcal{E}\) to some subset \(\Theta'
\subset \Theta\) is the experiment \((X, P_{\theta} ; \theta \in \Theta')\).

Whenever we say that a measurable space \(X = (\mathcal{X}, \mathcal{A})\) on
sample space \(\mathcal{X}\) with \(\sigma\)-algebra \(\mathcal{A}\) is
partitioned into (a measurable partition) \(X_{1}, \dotsc, X_{n}\) we mean that
\(X_{i} = (\mathcal{X}_{i}, \mathcal{A}_{i})\) are (measurable) subspaces of
\(X\) such that \(\bigcup_{i=1}^{n}\mathcal{X}_{i} = \mathcal{X}\).

When a measure \(\nu\) is absolutely continuous with respect to another measure
\(\nu\) we write \(\mu \ll \nu\). Writing \(d\mu = fd\nu\) for some
measurable \(f\) means \(f\) is a density (Radon-Nikodym derivative or
likelihood ratio) of \(\mu\) with respect to \(\nu\). The integral of some
\(\mu\)-integrable \(f\) is written \(\int f \,d\mu = \int f(x) \, \mu(dx)\).
\(\lambda\) will always denote the Lebesgue measure on the unit interval.

For any set \(A\), let \(\mathcal{D}(A) = (A, \mathcal{A})\) denote the
measurable space with underlying set \(A\) and the discrete \(\sigma\)-algebra
\(\mathcal{A} = 2^{A}\), the set of all subsets of \(A\). For a measurable space
\(X = (\mathcal{X}, \mathcal{A})\) we denote by \(\mathcal{P}(X)\) the
(measurable) space of probability measures on \(X\). The space
\(\mathcal{P}(X)\) is contained in a normed space of signed measures with the
norm given by the total variation norm. For \(P, Q \in \mathcal{P}(X)\) the
distance \(\|P - Q\|\) will always refer to the total variation distance.

Given a measurable \(f \colon X \to Y\), for two measurable spaces \(X\) and
\(Y\), we denote by \(f^{\ast}\) the corresponding (push-forward) map on
measures. In other words, 
\((f^{\ast}\mu)(A) = \mu(f^{-1}(A))\). The fact that \(K\) is a Markov-kernel from \(X\) to \(Y\) is written as \(K
\colon X \to_{k} Y\). We will not notationally distinguish \(K\) as a kernel
acting on \(X\) from \(K\) as a map acting on \(\mathcal{P}(X)\).

Products and direct sums of spaces, maps, kernels, measures, or experiments are
denoted by \(\cdot \otimes \cdot\) and \(\cdot \oplus \cdot\) or by
\(\bigotimes_{i \in I} \cdot_{i}\) and \(\bigoplus_{i \in I} \cdot_{i}\) for
\(I\)-indexed families of spaces (maps, kernels, measures, or experiments). By
analogy, we write \(\cdot^{\otimes n}\) for the \(n\):fold product of \(\cdot\).
For reasons of readability, we will implicitly use the fact that for any spaces
\(X\), \(Y\), and \(Z\) we have natural canonical (bimeasurable) isomorphisms
\((X \otimes Y) \otimes Z \cong X \otimes (Y \otimes Z) \cong X \otimes Y
\otimes Z\), where the lattermost is interpreted as a space of triplets. Any
reader concerned by this may wish to interpret expressions of the type
\(X^{\otimes n} \otimes Y^{\otimes m} \otimes Z^{\otimes r}\) as a shorthand for
the space of \((n+m+r)\)-tuples \(X \otimes \dotsb \otimes X \otimes Y \otimes
\dotsb \otimes Y \otimes Z \otimes \dotsb \otimes Z\), and similarly for
measures.

The mixture experiment (convex combination) of an \(I\)-indexed family of
experiments \((\mathcal{E}_{i})_{i \in I}\) with respect to convex coefficients
(probability mass function) \(p\) over \(I\) is denoted \(\sum_{i \in
  I}p_{i}\mathcal{E}_{i}\).

A decision \((A, L)\) problem on \(\Theta\) with action space \(A\) and loss
function \(L \colon A \times \Theta \to \R\) is said to be finite if \(A\) is
finite, normalised if the range is contained in \([0, 1]\), and 0-1 if the range
is contained in \(\{0, 1\}\). Saying \(\rho\) is a decision procedure for \((A,
L)\) on observing \(\mathcal{E} = (X, P_{\theta} ; \theta \in \Theta)\) means
\(\rho\) is a Markov kernel \(X \to_{k} A\). The risk \(R_{\mathcal{E}}(\rho,
\theta) = \int L(a, \theta) \, \rho_{x}(da) \, P_{\theta}(dx)\) will be
expressed in words as «the risk of \(\rho\) for \((L, A)\) at \(\theta \in
\Theta\) on observing \(\mathcal{E}\)». The average (Bayes) risk with respect to
some prior \(\pi\) on \(\Theta\) is expressed as «the risk of \(\rho\) for \((L,
A)\) with respect to prior \(\pi\) on observing \(\mathcal{E}\)». Similarly, the
minimum Bayes risk will be referred to as «the (minimum) Bayes risk of \((L,
A)\) with respect to prior \(\pi\) on observing \(\mathcal{E}\)» and any
procedure that achieves this risk is «Bayes for \((L, A)\) with respect to prior
\(\pi\) on observing \(\mathcal{E}\)». Any of \((L, A)\), \(\theta\), \(\pi\),
or \(\mathcal{E}\) that are understood from context may be suppressed.

The deficiency of an experiment \(\mathcal{E} = (X, P_{\theta}; \theta \in
\Theta)\) with respect to another experiment \(\mathcal{F}\) on the same
parameter space is denoted by \(\delta(\mathcal{E}, \mathcal{F})\). If
\(\delta(\mathcal{E}, \mathcal{F}) = 0\) then \(\mathcal{E}\) is said to be more
informative than \(\mathcal{F}\). If also \(\mathcal{F}\) is more informative
than \(\mathcal{E}\), they are said to be equivalent, denoted by \(\mathcal{E}
\cong \mathcal{F}\). A measurable map \(f \colon X \to Y\) such that
\(\mathcal{E}\) is equivalent to \(\mathcal{F} = (Y, f^{\ast}(P_{\theta}) ;
\theta \in \Theta)\) is said to be sufficient for \(\mathcal{E}\).

\section{Main results}
\label{sec:main}

Let \(\mathcal{E}\) be an experiment such that for each \(n \in \N\) we have an
\(n\):fold repetition \(\mathcal{E}^{\otimes n}\), the experiment given by
independently repeating \(\mathcal{E}\) independently \(n\) times. Our quantity
of interest is the \emph{value of an additional observation of \(\mathcal{E}\)
  when one already has \(n\)}, formalised as \(\delta(\mathcal{E}^{\otimes n},
\mathcal{E}^{\otimes n+1})\), the deficiency between the \(n\):fold and
\((n\!+\!1)\):fold repetition of \(\mathcal{E}\).

The main contribution of this paper is an upper and a lower bound for this
quantity for non-parametric experiments defined by sets of densities that are
very \emph{rich} in a sense that will be made precise.

Our upper bound holds for experiments that are contained in a certain type of
ball around a central probability measure.

\begin{samepage}
  \begin{definition}[\((C,s)\)-indistinguishably]
    For \(C \geq 1\), \(s > 0\) and measurable space \(X\), we will say that a
    probability measure \(Q\) on \(X\) is \((C,s)\)-indistinguishable from another
    probability measure \(P\) on \(X\) if there exists a density \(g\) on \(X\)
    such that \(dQ = gdP\) (a likelihood ratio) and for independent \(\xi_{1},
    \xi_{2}, \dotsc \sim P\) we have an exponential concentration of the form
    \begin{equation}\label{eq:indistinguishable}
      \Prob
      \Big(
      |\sum_{i=1}^{n}g(\xi_{i}) - n| > nt
      \Big) \leq Ce^{-snt^{2}}.
    \end{equation}
  \end{definition}
\end{samepage}

Taking \(n = 1\) in Equation~\eqref{eq:indistinguishable} means a necessary
condition is that \(g(\xi_{i})\) have a sub-Gaussian distribution. Conversely,
to establish Equation~\eqref{eq:indistinguishable} one may apply an appropriate
concentration inequality for sub-Gaussian random
variables\cite[Theorem~2.6.2]{Vershynin2018}. To satisfy
Equation~\eqref{eq:indistinguishable} for some \(C > 0\) and \(s > 0\) the
condition of sub-Guassianity is therefore both necessary and sufficient. The
definition is given in the above form in order to make the constants explicit in
a conceptually and notationally convenient way. One way the condition may be
satisfied for some \(C > 0\) and \(s > 0\) is for the likelihood ratio \(g\) to be
essentially bounded.

Conversely, it is immediately not satisfied if there exists some measurable
\(A\) such that \(Q(A) > 0\) but \(P(A) = 0\), since then \(\lim_{t \to
  \infty}\Prob(g(\xi_{1}) > t) \neq 0\). It is therefore necessary that \(Q\) be
absolutely continuous with respect to \(P\). This can be refined a bit further.
Observing that since, in Equation~\eqref{eq:indistinguishable}, \(g\) is the
likelihood ratio \(dQ/dP\) and \(\xi_{1} \sim P\) we have for any convex \(f\)
with \(f(1) = 0\) that \(\int f(g(x)) \, P(dx) = \Expect(f(g(\xi_{i})))\) is the
so called \(f\)-divergence \(D_{f}(Q, P)\). Taking \(f(x) = x \log(x)\) or
\(f(x) = (x - 1)^{2}\) yields, for example, the Kullback-Leibler and
\(\chi^{2}\)-divergences (see for example \cite[Section~III]{Liese2006}).

Taking \(t = 0\) in Equation~\ref{eq:indistinguishable} we see that,
necessarily, \(C \geq 1\). For \(n=1\) and \(t \geq \sqrt{\log(C)/s}\) the right
hand side of Equation~\ref{eq:indistinguishable} can be interpreted as the
survival function of a distribution with density \(x \mapsto 2sxCe^{-sx^{2}}\).
This can be interpreted as the distribution of \(|g(\xi_{1}) - 1|\) being
smaller than this distribution in (the usual) stochastic (dominance)
order\cite{Shaked2007}. Taking \(f(x) = (x - 1)^{2}\) this gives the following
bound on the \(\chi^{2}\)-divergence:
\begin{equation*}
  D_{f}(Q, P)
  =
  \Expect(f(g(\xi_{i})))
  =
  \Expect(|g(\xi_{i}) - 1|^{2})
  \leq
  \int_{\sqrt{\frac{\log(C)}{s}}}^{\infty} x^{2} 2sxCe^{-sx^{2}}\,dx.
\end{equation*}
The right-hand side is easily bounded in terms of the third moment of a Normal
distribution and depends only on \(C\) and \(s\). Thus the collection of \(Q\)
that are \((C,s)\)-indistinguishable from \(P\) are contained in some
\(\chi^{2}\)-ball with a radius controlled by \(C\) and \(s\). Similar, though
somewhat messier, arguments can be made for other divergences, such as Hellinger
distance or Kullback-Leibler divergence.

An experiment defined by measures that have a «central» measure
indistinguishable from all of them is, in some sense, not too large. Such
experiments exhibit a decay in the amount of information in additional
observations at a rate of at least \(1/\sqrt{n}\).

\begin{samepage}
  \begin{theorem}\label{thm:mainupper}
    Let \(\mathcal{E} = (X, P_{\theta} ; \theta \in \Theta)\) be an experiment
    such that for some \(C \geq 1\) and \(s > 0\) there exists a probability
    measure \(Q\) on \(X\) such that for each \(\theta \in \Theta\), \(Q\) is
    \((C,s)\)-indistinguishable from \(P_{\theta}\). Then
    \begin{equation*}
      \delta(\mathcal{E}^{\otimes n}, \mathcal{E}^{\otimes n+1})
      \leq
      C\sqrt{\frac{\pi}{4s}}\frac{1}{\sqrt{n+1}}.
    \end{equation*}
  \end{theorem}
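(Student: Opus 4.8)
The plan is to realise the deficiency with the single explicit Markov kernel that implements the trivial estimator described in the introduction, the one that always guesses the sampling distribution to be \(Q\). Concretely, I would define a kernel \(K \colon X^{\otimes n} \to_{k} X^{\otimes n+1}\) which, on input \((x_{1}, \dotsc, x_{n})\), picks a position \(J\) uniformly at random in \(\{1, \dotsc, n+1\}\), draws \(Y \sim Q\) independently of everything, and outputs the \((n\!+\!1)\)-tuple obtained by inserting \(Y\) at position \(J\). This is a genuine Markov kernel, being a countable mixture of (bimeasurable) insertion maps applied to the fixed measure \(Q\), and since \(Q\) does not depend on \(\theta\) neither does \(K\). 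As the deficiency is the infimum of \(\sup_{\theta}\|K'(P_{\theta}^{\otimes n}) - P_{\theta}^{\otimes n+1}\|\) over all such \(K'\), it then suffices to bound \(\sup_{\theta}\|K(P_{\theta}^{\otimes n}) - P_{\theta}^{\otimes n+1}\|\).

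The key step is to identify the density of \(K(P_{\theta}^{\otimes n})\) against \(P_{\theta}^{\otimes n+1}\). Fixing \(\theta\) and letting \(g_{\theta}\) be the likelihood ratio \(dQ/dP_{\theta}\) furnished by \((C,s)\)-indistinguishability (so \(Q \ll P_{\theta}\) and \(\Expect g_{\theta}(\xi_{1}) = 1\) for \(\xi_{1} \sim P_{\theta}\)), conditioning on the inserted position exhibits \(K(P_{\theta}^{\otimes n})\) as the uniform mixture over \(j \in \{1,\dotsc,n+1\}\) of the measures \(P_{\theta}^{\otimes (j-1)} \otimes Q \otimes P_{\theta}^{\otimes (n+1-j)}\), and the \(j\)-th of these has density \((x_{1}, \dotsc, x_{n+1}) \mapsto g_{\theta}(x_{j})\) with respect to \(P_{\theta}^{\otimes n+1}\). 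Hence \(K(P_{\theta}^{\otimes n}) \ll P_{\theta}^{\otimes n+1}\) with
\[
\frac{dK(P_{\theta}^{\otimes n})}{dP_{\theta}^{\otimes n+1}}(x_{1}, \dotsc, x_{n+1}) = \frac{1}{n+1}\sum_{j=1}^{n+1}g_{\theta}(x_{j}),
\]
so that, with \(\xi_{1}, \dotsc, \xi_{n+1} \sim P_{\theta}\) independent,
\[
\|K(P_{\theta}^{\otimes n}) - P_{\theta}^{\otimes n+1}\| = \Expect\Big|\frac{1}{n+1}\sum_{j=1}^{n+1}g_{\theta}(\xi_{j}) - 1\Big|.
\]

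To bound this \(L^{1}\)-deviation I would use the layer-cake identity \(\Expect|W| = \int_{0}^{\infty}\Prob(|W| > u)\,du\) together with the defining inequality \eqref{eq:indistinguishable} applied with \(n+1\) in place of \(n\): for every \(u > 0\) the event \(\{|\tfrac{1}{n+1}\sum_{j}g_{\theta}(\xi_{j}) - 1| > u\}\) is precisely \(\{|\sum_{j}g_{\theta}(\xi_{j}) - (n+1)| > (n+1)u\}\), whose probability is at most \(Ce^{-s(n+1)u^{2}}\). Integrating the resulting Gaussian tail gives \(\int_{0}^{\infty}Ce^{-s(n+1)u^{2}}\,du = C\sqrt{\pi/(4s)}\,(n+1)^{-1/2}\), and since this is uniform in \(\theta\), taking the supremum over \(\theta\) yields the claim.

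I do not expect a serious obstacle: the substantive content is the choice of kernel and the density identity, after which the exponential concentration in \eqref{eq:indistinguishable} is exactly calibrated to convert the \(L^{1}\)-deviation into the stated constant. The only points needing a little care are the measurability of the insertion kernel and keeping the total-variation normalisation consistent; with \(\|\cdot\|\) the total-variation norm on signed measures the computation produces the constant \(C\sqrt{\pi/(4s)}\) exactly as stated (and under the alternative \(\sup_{A}|\cdot(A)|\) convention the bound would merely improve by a further factor of \(2\)).
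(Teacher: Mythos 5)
Your proposal is correct and follows essentially the same route as the paper: the same insertion kernel built from the fixed measure \(Q\), the same identification of the density of \(K(P_{\theta}^{\otimes n})\) against \(P_{\theta}^{\otimes n+1}\) as the average \(\frac{1}{n+1}\sum_{j}g_{\theta}(x_{j})\), and the same layer-cake integration of the tail bound in Equation~\eqref{eq:indistinguishable}. Your closing remark on the total-variation normalisation is also consistent with how the paper handles (indeed, slightly over-counts) the factor of two.
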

\end{samepage}

This statement is essentially an application of Lemma~1 of
Helgeland\cite{Helgeland1982} and analogous to their Corollary~1 or to Theorem~1 of
Mammen\cite{Mammen1986}, but for a class of non-parametric experiments. The proof
is given in Section~\ref{sec:upperbound}.

A corresponding lower bound holds for experiments that are non-parametric in the
following sense. Observing \(\xi\) from the unknown, underlying measure \(P\) of
the experiment gives information about \(P\) only locally around \(\xi\). For
parametric experiments, by contrast, knowing some local structure of \(P\) may
well uniquely determine it. Making an analogy to
Devroye~\&~Lugosi\cite{Devroye2001} we say such families are \emph{rich}.

\begin{samepage}
  \begin{definition}[\((m,\alpha,\beta)\)-richness]
    For \(m \in \N\), \(\alpha \geq 0\), \(\beta \geq 0\), and a measurable space
    \(X\) we say that a family of probability measures \((P_{\theta})_{\theta \in
      \Theta}\) on \(X\) indexed by a set \(\Theta\) is \((m,\alpha,\beta)\)-rich if: there exists a partition
    \(\{X_{1}, \dotsc, X_{m}\}\) of \(X\); a convex coefficients \((p_{1}, \dotsc,
    p_{m}) \in \Delta_{m}\) such that \(p_{1}, \dotsc, p_{m} \geq \beta/m\); and
    pairs of distinct probability measures \((Q_{1,0},Q_{1,1}), \dotsc,
    (Q_{m,0},Q_{m,1})\) supported on \(X_{1}, \dotsc, X_{m}\), respectively, such
    that \(\| Q_{j,0} - Q_{j,1} \| \geq \alpha\)
    and for every \((i_{1}, \dotsc, i_{m}) \in \{0,1\}^{m}\)
    there exists \(\theta \in \Theta\) satisfying \(P_{\theta} = p_{1}Q_{1,i_{1}}
    + \dotsb + p_{m}Q_{m,i_{m}}\).
  \end{definition}
\end{samepage}

A typical example of such a family is given in the proof of
Corollary~\ref{cor:main} and can be seen in Figure~\ref{fig:diffononeparam}. The
choice of \(\alpha\) representing the total variation distance is for
concreteness. It holds more generally if replaced by \(\min(p,1-p) - r\) where
\(p \in (0, 1)\) and \(r\) is the Bayes risk of testing under a prior
probability putting mass \(p\) on \(Q_{j,0}\) and \(1-p\) on \(Q_{j,1}\). As
given here the definition is the special case where \(p = 1-p = 1/2\).

Taking \(\alpha > 0\) and \(\beta > 0\) typical parametric families
\((P_{\theta})_{\theta \in \Theta}\) will be \((m, \alpha, \beta)\)-rich only
for \(m = 1\). Consider for example, \(\Theta = \R\) with \(P_{\theta} =
\mathrm{N}(\theta, 1)\) the standard Gaussian shift family. Each \(P_{\theta}\)
is equivalent to the Lebesgue measure, meaning a set is a null set with respect
to some \(P_{\theta}\) if and only if it has Lebesgue measure zero. Let \(A\) be
some measurable set of positive Lebesgue measure. If \(P_{\theta}(B) =
P_{\theta'}(B)\) for each measurable \(B \subset A\), then \(\theta = \theta'\).
That is to say, \(\theta\) is uniquely determined by knowing \(P_{\theta}\)
restricted to any subspace of positive Lebesgue measure. This is most easily
seen by the fact that the standard, continuous, Gaussian density is uniquely
determined by knowing its value at three points and that the measure
\(P_{\theta}\) restricted to the subspace \(A\) has a density with respect to
Lebesgue measure that is essentially equal on \(A\) to exactly one such
continuous density.

Assume the family \((P_{\theta})_{\theta \in \Theta}\) were \((m, \alpha, \beta)\)-rich for some
\(\alpha > 0\) and \(\beta > 0\). The property is hereditary in the sense that
if it holds for \(m > 1\) it holds also for \(m-1\). If \(m \geq 2\) there would
therefore exist some partition \(X_{1}, X_{2}\) of \(\R\) where both \(X_{1}\)
and \(X_{2}\) have positive measure. Since \(P_{\theta}\) restricted to
\(X_{1}\) determines it on \(X_{2}\) there cannot exist measures \(Q_{1, 0},
Q_{1, 1}, Q_{2, 0}, Q_{2, 1}\) such that for some \(p \in (0, 1)\) both
\(pQ_{1,0} + (1-p)Q_{2,0}\) and \(pQ_{1,0} + (1-p)Q_{2,1}\) are Gaussian. The
family cannot therefore be \((m, \alpha, \beta)\)-rich for \(m > 1\).

Indeed, having a certain degree of richness is sufficient to get a lower bound
on the value of additional observations.

\begin{samepage}
  \begin{theorem}\label{thm:mainlower}
    Let \(\mathcal{E} = (X, P_{\theta} ; \theta \in \Theta)\) be an experiment
    such that for some \(n \in \N\), \(\alpha > 0\), and \(\beta > 0\) the family
    \((P_{\theta})_{\theta \in \Theta}\) is \((2n,\alpha,\beta)\)-rich, then %
    \begin{equation*}
      \frac{\alpha \beta}{12\sqrt{2}}
      \frac{1}{\sqrt{n+1}} \leq \delta(\mathcal{E}^{\otimes n}, \mathcal{E}^{\otimes n+1}).
    \end{equation*}
  \end{theorem}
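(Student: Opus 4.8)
The plan is to exhibit a specific decision problem on which $\mathcal{E}^{\otimes n+1}$ strictly outperforms $\mathcal{E}^{\otimes n}$ by the claimed margin, using the randomness in \emph{which of the $2n$ blocks of the partition the extra observation lands in}. Concretely, pick the partition $X_1,\dots,X_{2n}$, coefficients $p_j$, and measure pairs $(Q_{j,0},Q_{j,1})$ witnessing $(2n,\alpha,\beta)$-richness. Consider the action space $A=\{0,1\}$ together with the parameter-indexed family of $0$-$1$ loss functions obtained as follows: to each $(i_1,\dots,i_{2n})\in\{0,1\}^{2n}$ we have a $\theta$ with $P_\theta=\sum_j p_j Q_{j,i_j}$; the statistician is additionally told an index $j\in\{1,\dots,2n\}$ (this can be folded into the parameter, or handled by a prior that is uniform over $j$) and must decide the value of $i_j$. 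Equivalently, the decision problem is: "given $j$, test $Q_{j,0}$ versus $Q_{j,1}$ inside block $X_j$." The key feature is that an observation from $P_\theta$ contributes to this test only when it falls in $X_j$, which happens with probability $p_j\ge\beta/(2n)$.

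First I would lower-bound the gap in Bayes risk. Put a prior that is uniform over $j\in\{1,\dots,2n\}$ and, given $j$, uniform over $i_j\in\{0,1\}$ (with the remaining coordinates $i_{j'}$, $j'\ne j$, irrelevant to the loss and hence free). On observing $\mathcal{E}^{\otimes k}$ the number $N_j$ of observations landing in block $X_j$ is $\operatorname{Bin}(k,p_j)$, and conditionally on $N_j=\ell$ those observations are an i.i.d.\ sample of size $\ell$ from $Q_{j,i_j}$ restricted-and-renormalised to $X_j$; the observations outside $X_j$ carry no information about $i_j$. Using $\|Q_{j,0}-Q_{j,1}\|\ge\alpha$ and a standard bound on the total variation distance between product measures (subadditivity of the squared Hellinger affinity, or the simple bound $\|Q^{\otimes\ell}-R^{\otimes\ell}\|\le \sqrt{\ell}\,\|Q-R\|$ type estimate), the Bayes risk of testing $Q_{j,0}$ vs.\ $Q_{j,1}$ from $\ell$ samples is at least $\tfrac12 - c\sqrt{\ell}\,\alpha$ for an absolute constant — actually it is cleaner to keep the testing-affinity form and not linearise yet. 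Averaging over $N_j\sim\operatorname{Bin}(k,p_j)$ and then over $j$ gives the minimum Bayes risk $r_k$ on observing $\mathcal{E}^{\otimes k}$. The quantity to control is $r_n-r_{n+1}$, the drop in Bayes risk caused by the $(n{+}1)$-st observation; since $\delta(\mathcal{E}^{\otimes n},\mathcal{E}^{\otimes n+1})$ dominates the difference in Bayes risk for any $[0,1]$-valued loss, a lower bound on $r_n-r_{n+1}$ is a lower bound on the deficiency.

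The heart of the argument is then a monotonicity/concavity estimate for the function $\ell\mapsto(\text{Bayes testing risk from }\ell\text{ samples})$ combined with the Binomial coupling. The clean way: couple $\operatorname{Bin}(n+1,p_j)$ and $\operatorname{Bin}(n,p_j)$ so that the former exceeds the latter by a $\operatorname{Bernoulli}(p_j)$ increment; then $r_n-r_{n+1}$ equals, up to the $1/(2n)$ averaging over $j$, the probability $p_j$ that the increment fires times the expected \emph{decrease in testing risk} when the sample size goes from $N_j$ to $N_j+1$. That per-step decrease, averaged over $N_j\sim\operatorname{Bin}(n,p_j)$, is where the $1/\sqrt{n+1}$ emerges: the testing risk as a function of sample size behaves like $\tfrac12 - \Theta(\sqrt{\ell}\,\alpha)$ for small $\ell\alpha^2$, whose increments are of order $\alpha/\sqrt{\ell}$, and $\mathbb{E}[\alpha/\sqrt{N_j+1}]\asymp \alpha/\sqrt{n p_j+1}$; multiplying by $p_j\asymp\beta/(2n)$ and averaging gives order $\alpha\beta/n \cdot \sqrt{n}=\alpha\beta/\sqrt{n}$ after the $1/(2n)\sum_j$, matching the stated $\tfrac{\alpha\beta}{12\sqrt2}\tfrac{1}{\sqrt{n+1}}$ once the constants are tracked honestly. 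I expect the main obstacle to be making this "increment of the testing risk" bound rigorous and with explicit constants for \emph{all} sample sizes, not just in the small-$\ell\alpha^2$ regime — in particular ruling out the degenerate possibility that the testing risk has already saturated to near $0$, so that extra samples help negligibly. This is handled by noting that the worst case for the lower bound is exactly when the $Q_{j,i}$ are as close as the constraint $\|Q_{j,0}-Q_{j,1}\|\ge\alpha$ allows, i.e.\ $\|Q_{j,0}-Q_{j,1}\|=\alpha$, and for such a pair the testing risk decreases at a controlled rate up to sample sizes of order $1/\alpha^2$; since we only ever average against $N_j\sim\operatorname{Bin}(n,p_j)$ and the bound is a lower bound, a careful choice (e.g.\ restricting attention to the event $N_j\le c/\alpha^2$, or simply using a global concavity-type inequality for the Hellinger affinity of product measures) closes the gap. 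The remaining work is the explicit arithmetic — bounding $\mathbb{E}[(N_j+1)^{-1/2}]$ from below via Jensen or a direct Binomial computation, and assembling the constant $1/(12\sqrt2)$.
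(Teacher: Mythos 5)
There is a genuine gap, and it sits exactly where your heuristic produces a factor of $\sqrt{n}$ out of nowhere. Your decision problem --- reveal a uniformly random index $j$ and test $i_j$ --- has Bayes risk $r_k = \frac{1}{2n}\sum_{j}\Expect\bigl[\rho_j(N_j^{(k)})\bigr]$, where $N_j^{(k)}\sim\Bin(k,p_j)$ and $\rho_j(\ell)$ is the risk of testing $Q_{j,0}^{\otimes\ell}$ against $Q_{j,1}^{\otimes\ell}$. With the Bernoulli-increment coupling you describe, $r_n-r_{n+1}=\frac{1}{2n}\sum_j p_j\,\Expect\bigl[\rho_j(N_j)-\rho_j(N_j+1)\bigr]$. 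Since every increment $\rho_j(\ell)-\rho_j(\ell+1)$ is at most $1/2$, this is bounded above by $\frac{1}{2n}\sum_j \frac{p_j}{2}=\frac{1}{4n}$, \emph{unconditionally}. So this decision problem can never certify a gap larger than $O(1/n)$ and provably cannot yield the stated $\frac{\alpha\beta}{12\sqrt{2}}\frac{1}{\sqrt{n+1}}$ for large $n$. The step ``averaging gives order $\alpha\beta/n\cdot\sqrt{n}$'' is unjustified: the $\frac{1}{2n}\sum_j$ of $2n$ terms each of size $\approx p_j\cdot\alpha\approx\alpha\beta/(2n)$ is $\approx\alpha\beta/(2n)$, full stop. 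Telling the statistician which single cell to test discards the very feature that produces the $1/\sqrt{n}$ rate.

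The paper's proof instead uses a \emph{simultaneous} multiple-testing problem: guess all $2n$ bits, with $0$--$1$ loss indicating whether the number of wrong guesses is at least a threshold $l$; the Bayes risk is then $\Expect\bigl(\Prob(\PBin(r_1(N_1),\dots,r_{2n}(N_{2n}))\geq l)\bigr)$ (Lemmas~\ref{lemma:mixedpower} and~\ref{lemma:multitestbound}). Two effects you are missing then combine. First, since $N_1+\dotsb+N_{2n}=n$ while there are $2n$ cells, at least $n$ cells receive no observation; for each such cell the extra observation lands there with probability $p_i\geq\beta/(2n)$ and drops its error probability from $1/2$ by at least $\alpha/2$, so the \emph{mean} of the error count shifts by at least $n\cdot\frac{\beta}{2n}\cdot\frac{\alpha}{2}=\alpha\beta/4$ --- a constant: the $n$ empty cells exactly cancel the $1/(2n)$ in $p_i$. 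Second, converting a constant mean shift of a $\{0,\dots,2n\}$-valued error count into a risk gap for a bounded loss requires an anti-concentration statement: Lemma~\ref{claim:mixedpbin} finds a threshold $l$ carrying mass at least $1/(3\sqrt{2n})$, and Lemma~\ref{lemma:pbindiff} localises the mean shift at that $l$; the product gives the constant $\frac{1}{12\sqrt{2}}$. Without a loss that aggregates all $2n$ cells and an anti-concentration input of this kind, no choice of constants will close your argument. (Your worry about the per-cell testing risk saturating is moot in the correct argument: only $\rho_j(0)-\rho_j(1)\geq\alpha/2$ is ever used.)
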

\end{samepage}

This result is somewhat comparable to Proposition~2 of
Helgeland\cite{Helgeland1982} and Theorem~2 of Mammen\cite{Mammen1986}, both of
which concern experiments that are finite-dimensional in an appropriate sense.
The proof is given in Section~\ref{sec:lowerbound}.

In particular, if for some \(\alpha > 0\) and \(\beta > 0\) a family is \((m,
\alpha, \beta)\)-rich for all \(m \in \N\) then the value of additional
observations from the corresponding experiment decreases at a rate of
\(1/\sqrt{n}\).

To construct an example of such an experiment, let \(X\) be the unit interval and consider the experiment
\begin{equation}\label{eq:mainexperiment}
  \mathcal{E} = (X, P_{f} ; f \in \Theta) 
  \quad
  \Theta = \{f : X \to \R \mid 1/r \leq f, \lambda(f) = 1\}
\end{equation}
for some \(r > 1\) and where \(dP_{f} = fd\lambda\). The choice of the unit
interval with Lebesgue/uniform dominating measure is mostly for concreteness.

The experiment in Equation~\eqref{eq:mainexperiment} is interesting, on the one
hand, for being an experiment for which \(\delta(\mathcal{E}^{\otimes n},
\mathcal{E}^{\otimes n+1})\) will turn out to decay at a non-trivial rate other
than the parametric one of \(1/n\). On the other hand, it is interesting also
because it is impossible to consistently estimate \(P_{f}\) in total variation
distance (see Theorem~15.1 and succeeding remarks in Devroye\cite{Devroye2001}).
In other words, one is in some sense exhausting information, but doing so without being
able to precisely determine the underlying distribution. Note that these
statements are not in contradiction, since the deficiency distance takes into
account only finite decision problems.

For the experiment described in Equation~\ref{eq:mainexperiment} we may use
Theorems~\ref{thm:mainupper}~and~\ref{thm:mainlower} to identify the rate at which
the value of having one additional observation decays.

\begin{samepage}
  \begin{corollary}\label{cor:main}
    For the experiment \(\mathcal{E}\) in Equation~\eqref{eq:mainexperiment} we
    have
    \begin{equation*}
      c\frac{1}{\sqrt{n+1}}
      \leq
      \delta(\mathcal{E}^{\otimes n}, \mathcal{E}^{\otimes n+1})
      \leq
      C\frac{1}{\sqrt{n+1}},
    \end{equation*}
    where \(c = \frac{1 - 1/r}{12\sqrt{2}}\) and \(C = \sqrt{\frac{\pi}{2}}r < 1.3r\).
  \end{corollary}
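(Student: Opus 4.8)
The plan is to obtain the two inequalities separately, each as an instance of one of the two main theorems, by exhibiting for the particular \(\Theta\) of Equation~\eqref{eq:mainexperiment} the structural data those theorems demand.

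\emph{Upper bound.} I would apply Theorem~\ref{thm:mainupper} with central measure \(Q = \lambda\). For every \(f \in \Theta\) the function \(g = 1/f\) is a version of the likelihood ratio \(d\lambda/dP_{f}\), since \(dP_{f} = f\,d\lambda\), and the hypothesis \(f \geq 1/r\) forces \(0 < g \leq r\). If \(\xi_{1}, \xi_{2}, \dotsc \sim P_{f}\) then \(\Expect(g(\xi_{i})) = \int (1/f)\,f\,d\lambda = 1\) and each \(g(\xi_{i})\) lies in \([0, r]\), so Hoeffding's inequality for bounded independent summands gives \(\Prob\big(|\sum_{i=1}^{n}g(\xi_{i}) - n| > nt\big) \leq 2e^{-2nt^{2}/r^{2}}\). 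Hence \(Q\) is \((C,s)\)-indistinguishable from each \(P_{f}\) with \(C = 2\) and \(s = 2/r^{2}\), and Theorem~\ref{thm:mainupper} yields the stated upper bound with constant \(C\sqrt{\pi/(4s)} = r\sqrt{\pi/2} < 1.3\,r\).

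\emph{Lower bound.} Here I would verify that \((P_{f})_{f\in\Theta}\) is \((2n,\alpha,\beta)\)-rich with \(\alpha = 1 - 1/r\) and \(\beta = 1\), and then invoke Theorem~\ref{thm:mainlower}. Partition \(X = [0,1]\) into the \(2n\) consecutive intervals \(X_{j}\) of length \(1/(2n)\); take the uniform weights \(p_{j} = 1/(2n)\), so that \(p_{j} = \beta/(2n)\) with \(\beta = 1\). On each \(X_{j}\), split \(X_{j}\) into a left and a right half and let \(Q_{j,0}\) be the probability measure whose Lebesgue density on \(X_{j}\) equals the floor value \(2n/r\) everywhere except that the remaining mass \(1 - 1/r\) is spread uniformly over the left half; let \(Q_{j,1}\) be the same with «left» and «right» exchanged. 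Both densities are \(\geq 2n/r\) on \(X_{j}\), and \(Q_{j,0}\), \(Q_{j,1}\) differ only by moving the mass \(1 - 1/r\) from one half of \(X_{j}\) to the other, so \(\|Q_{j,0} - Q_{j,1}\| = 1 - 1/r = \alpha\). For any \((i_{1},\dotsc,i_{2n}) \in \{0,1\}^{2n}\) the mixture \(\sum_{j}p_{j}Q_{j,i_{j}}\) has Lebesgue density \(\tfrac{1}{2n}(dQ_{j,i_{j}}/d\lambda) \geq \tfrac{1}{2n}\cdot\tfrac{2n}{r} = 1/r\) on \(X_{j}\) and total mass \(1\), hence equals \(P_{f}\) for some \(f \in \Theta\). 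Feeding \(\alpha = 1 - 1/r\) and \(\beta = 1\) into Theorem~\ref{thm:mainlower} gives the lower constant \(c = \alpha\beta/(12\sqrt{2}) = (1 - 1/r)/(12\sqrt{2})\). This is the rich family displayed in Figure~\ref{fig:diffononeparam}.

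Neither direction is deep. The points needing care are pinning down the concentration constants — this is exactly where the hypothesis \(f \geq 1/r\) (equivalently, essential boundedness of \(1/f\)) enters — and checking that every mixture produced in the richness construction keeps density at least \(1/r\), which is what forces the per-interval components to have density at least \(2n/r\) and thereby leaves precisely the budget \(1 - 1/r\) of free mass for creating the separation \(\alpha\). The only thing resembling an obstacle is the bookkeeping that makes the two resulting rates \(1/\sqrt{n+1}\) coincide: the loss parameter \(r > 1\) enters the upper bound only through the sub-Gaussian parameter \(s = 2/r^{2}\) and the lower bound only through \(\alpha = 1 - 1/r\), so the two bounds agree in rate while the constants \(C\) and \(c\) remain non-degenerate.
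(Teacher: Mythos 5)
Your proposal is correct and follows essentially the same route as the paper: the upper bound via Theorem~\ref{thm:mainupper} with \(Q=\lambda\), Hoeffding giving \((2,2/r^{2})\)-indistinguishability from the bound \(1/f\le r\) on the likelihood ratio, and the lower bound via \((2n,1-1/r,1)\)-richness with the regular mesh and uniform weights. Your description of \(Q_{j,0},Q_{j,1}\) as the floor density \(2n/r\) plus the excess mass \(1-1/r\) placed on one half-cell is numerically identical to the paper's step densities \(g_{0},g_{1}\) taking values \(1/r\) and \(2-1/r\) after rescaling, so the verification of the total variation separation and of membership in \(\Theta\) coincides with the paper's.
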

\end{samepage}

\begin{proof}
  To get the upper bound, apply Theorem~\ref{thm:mainupper} with \(Q = \lambda\)
  the uniform (Lebesgue) measure on \([0, 1]\). By Hoeffding's inequality for
  bounded distributions we know that \(\lambda\) is
  \((2,2/r^{2})\)-indistinguishable from \(P_{f}\) for each \(f \in \Theta\).
  Thus, by Theorem~\ref{thm:mainupper}, we know \(\delta(\mathcal{E}^{\otimes n},
  \mathcal{E}^{\otimes n+1}) \leq \sqrt{\frac{\pi}{2}}r\frac{1}{\sqrt{n+1}}\).

  In order to establish the lower bound using Theorem~\ref{thm:mainlower}, we
  prove that the family \((P_{f})_{f \in \Theta}\) is \((m, 1-1/r, 1)\)-rich for
  every \(m \in \N\).

  Take the partition given by the regular mesh \(X_{1}, \dotsc, X_{m-1}, X_{m}\)
  with underlying sets \([0, 1/m), \dotsc, [1-2/m, 1-1/m), [1-1/m, 1]\), uniform
  convex coefficients \(p_{1} = 1/m, \dotsc, p_{m} = 1/m\), and pairs of
  distributions \((Q_{1,0}, Q_{1,1}), \dotsc, (Q_{m,0}, Q_{m,1})\) specified by
  \(dQ_{i,j} = g_{i,j}d\lambda\) for \(g_{i,0}(x) = mg_{0}((i-1+x)/m)\) and
  \(g_{i,1} = mg_{1}((i-1+x)/m)\) where
  \begin{align}\label{eq:simpledensities}
    &g_{0}(x) = 
    \begin{cases}
      1/r & x \leq 1/2 \\
      2 - 1/r & x > 1/2
    \end{cases}
    &&\text{and}
    &&g_{1}(x) =
    \begin{cases}
      2 - 1/r & x \leq 1/2 \\
      1/r & x > 1/2
    \end{cases}.
  \end{align}
  For any \((i_{1}, \dotsc, i_{m}) \in \{0, 1\}^{m}\) consider
  \begin{equation*}
    f(x) =
    \begin{cases}
      g_{i_{1}}(x/m) & x \in X_{1} \\
      \vdotswithin{g_{i_{m}}} \\
      g_{i_{m}}((m-1+x)/m) & x \in X_{m}
    \end{cases},
  \end{equation*}
  such that we have \(P_{f} = \sum_{j=1}^{m}p_{j}P_{j,i_{j}} = fd\lambda\). The
  densities are illustrated in Figure~\ref{fig:diffononeparam}.
  
  Since \(f \in \Theta\) it remains only to realise that for each \(j \in \{1,
  \dotsc, m\}\) the total variation distance between \(Q_{j,0}\) and \(Q_{j,1}\)
  is \(1-1/r\). \qedhere{}
\end{proof}

\begin{figure}[ht]
  \centering%
  \includegraphics{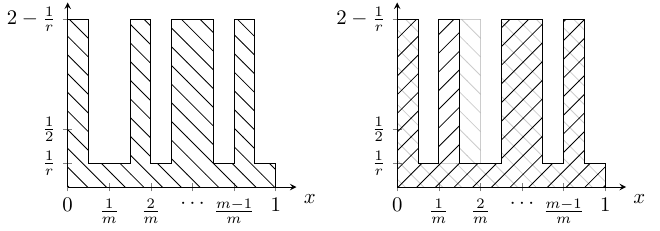}
  \caption{Plots of \(f\) in the proof of Corollary~\ref{cor:main} differing
    only on \(i_{2}\)}
  \label{fig:diffononeparam}
\end{figure}

Note that the assumption that all densities are bounded away from \(0\) by some
constant is fairly common. It appears, for example, in the classical result of
Nussbaum\cite{Nussbaum1996} on the asymptotic equivalence to a white noise model
when observing an unknown density from a certain smoothness class.

\section{Proof of upper bound (Theorem~\ref{thm:mainupper})}
\label{sec:upperbound}

The aim of this section is to prove Theorem~\ref{thm:mainupper}, for which we
use a technique due to Helgeland\cite{Helgeland1982}. The main idea is to
emulate \(n+1\) independent observations based on \(n\) independent observations
by injecting, in a random position, a new, randomised, value. In Helgeland's
proof the value is sampled from an estimate of the underlying distribution,
based on the \(n\) truly independent and identically distributed observations.
In our proof this estimated distribution will be replaced by a single, fixed
distribution. To improve readability, we repeat the relevant parts from the
proof of Helgeland's Lemma~1.

\begin{proof}[Proof of Theorem~\ref{thm:mainupper}]  
  Let \(C \geq 1\), \(s > 0\), \(\mathcal{E} = (X, P_{\theta} ; \theta \in
  \Theta)\) and \(Q\) be as in the statement, such that \(dQ =
  g_{\theta}P_{\theta}\) for a family of densities \((g_{\theta})_{\theta \in \Theta}\). By assumption
  these satisfy that for each \(\theta \in \Theta\) and for \(\xi_{1}, \xi_{2},
  \dotsc \sim P_{\theta}\) that
  \begin{equation*}
    \Prob
    \Big(
    |\sum_{i=1}^{n}g_{\theta}(\xi_{i}) - n| > nt
    \Big) \leq Ce^{-snt^{2}}.
  \end{equation*}

  Recall that the standard technique for bounding deficiencies from above
  follows from the fact that for any Markov-kernel \(K : X^{\otimes n} \to_{k}
  X^{\otimes n+1}\) we have
  \begin{equation*}
    \delta(\mathcal{E}^{\otimes n}, \mathcal{E}^{\otimes n + 1})
    \leq
    \sup_{\theta}\| KP_{\theta}^{\otimes n} -  P_{\theta}^{\otimes n+1}\|.
  \end{equation*}
  This is either immediate from the definition (for example in
  Le~Cam\cite[Definition~2.3.1]{LeCam1986}) or a Theorem (for example in
  Torgersen\cite[Theorem~6.2.4]{Torgersen1991}).
  Take as \(K\) a kernel that injects, in a random position, an additional
  randomised observation from \(Q\). The idea is illustrated in Figure~\ref{fig:kernelpicture}.
  \begin{figure}[ht]
    \centering%
    \includegraphics[trim={0 0 0 0.75cm}]{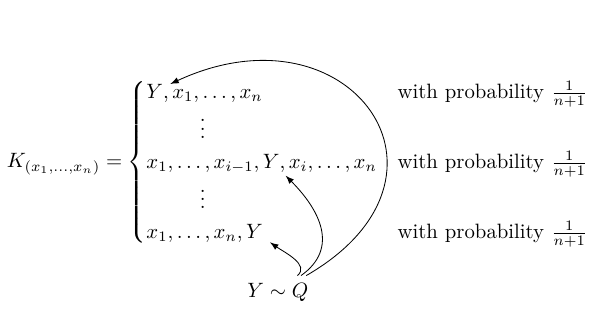}%
    \caption{Illustration of kernel \(K\) in proof of Theorem~\ref{thm:mainupper}}
    \label{fig:kernelpicture}
  \end{figure}

  Formally for each \(x = (x_{1}, \dotsc, x_{n}) \in X^{\otimes n}\) define
  \(K\) by
  \begin{equation*}
    K_{x}(A) = \frac{1}{n+1}\sum_{i=1}^{n+1}\int \mathbf{1}_{A}(x_{1}, \dotsc, x_{i-1}, y, x_{i}, \dotsc, x_{n})\,Q(dy),
  \end{equation*}
  such that In particular,
  \begin{equation*}
    KP_{\theta}^{\otimes n} = \frac{1}{n+1}\sum_{i=1}^{n+1}P_{\theta}^{\otimes i-1} \otimes Q \otimes P_{\theta}^{\otimes n-i+1}.
  \end{equation*}

  Using this kernel, bounding the deficiency turns into a question of
  controlling the absolute deviation of an average.

  Fix some \(\theta \in \Theta\) and let \(\xi_{1}, \dotsc, \xi_{n+1} \sim
  P_{\theta}\) be independent. Rewriting the total variation distance between
  \(KP_{\theta}^{\otimes n}\) and \(P_{\theta}^{\otimes n+1}\) in terms of the
  \(L^{1}\)-distance between their densities with respect to
  \(P_{\theta}^{\otimes n+1}\) yields
  \begin{equation*}
    \begin{split}
      2\| KP_{\theta}^{\otimes n} -  P_{\theta}^{\otimes n+1}\| &=
      2\Big\| \frac{1}{n+1}\sum_{i=1}^{n+1} P_{f}^{\otimes i-1} \otimes Q \otimes P_{f}^{\otimes n-i+1} - P_{f}^{\otimes n+1} \Big\|
      \\&=
      \int \Big|\frac{1}{n+1}\sum_{i=1}^{n+1}g_{\theta}(x_{i}) - 1\Big|\,P_{\theta}^{\otimes n+1}(dx)
      \\&=
      \Expect\Big(\Big|\frac{1}{n+1}\sum_{i=1}^{n+1}g_{\theta}(\xi_{i}) - 1\Big|\Big).
    \end{split}
  \end{equation*}
  By assumption \(g_{\theta}(\xi_{1}), \dotsc, g_{\theta}(\xi_{n+1})\) are
  independent and identically distributed. Moreover, \(g_{\theta}\) is the
  density of \(Q\) with respect to \(P_{\theta}\) so that
  \(\Expect(g_{\theta}(\xi_{i})) = 1\). We are thus reduced to bounding the mean
  absolute deviation of the average
  \(\frac{1}{n+1}\sum_{i=1}^{n+1}g_{\theta}(\xi_{i})\) from its mean \(1\). Such
  a bound follows immediately from the assumption that \(Q\) is
  \((C,s)\)-indistinguishable from \(P_{\theta}\) since then
  \begin{equation*}
    \begin{split}
      \Expect\Big(\Big|\frac{1}{n+1}\sum_{i=1}^{n+1}g_{\theta}(\xi_{i}) - 1\Big|\Big)
      &=
      \int_{0}^{\infty} \Prob\Big(\Big|\frac{1}{n+1}\sum_{i=1}^{n+1}g_{\theta}(\xi_{i}) - 1\Big| > t\Big) \,dt
      \\&\leq
      C\sqrt{\frac{\pi}{(n+1)s}}\frac{1}{2} \int_{-\infty}^{\infty} \frac{1}{\sqrt{\frac{\pi}{(n+1)s}}} e^{-(n+1)st^{2}}\,dt
      \\&=
      C\sqrt{\frac{\pi}{4(n+1)s}},
    \end{split}
  \end{equation*}
  where the inequality is exactly the concentration inequality in the definition
  of \(Q\) being \((C,s)\)-indistinguishable from \(P_{\theta}\). Combining the
  above two inequalities, we get
  \begin{equation*}
    \delta(\mathcal{E}^{\otimes n}, \mathcal{E}^{\otimes n + 1})
    \leq
    \sup_{\theta} C\sqrt{\frac{\pi}{4(n+1)s}}
    =
    C\sqrt{\frac{\pi}{4s}}\frac{1}{\sqrt{n+1}}.\qedhere
  \end{equation*}
\end{proof}

With the method used here it would be impossible to establish a faster rate of
decay with respect to \(n\). This follows from a result by
Mattner\cite{Mattner2003} which implies that
\begin{equation*}
  \Expect\Big(\Big|\sum_{i=1}^{n+1}g_{\theta}(\xi_{i}) - 1\Big|\Big)
  \geq
  \frac{1}{\sqrt{2(n+1)}}\Expect(|g_{\theta}(\xi_{1}) - 1|).
\end{equation*}
In other words, the absolute mean deviation can, if finite, not decay more
quickly than at a rate of \(1/\sqrt{n}\).

\section{Proof of lower bound (Theorem~\ref{thm:mainlower})}
\label{sec:lowerbound}

Establishing the lower bound in Theorem~\ref{thm:mainlower} is a bit more
involved than was the case for the upper bound. The proof may be found at the end of
this section. For clarity and readability, the proofs of some intermediate lemmas
and technical results are postponed until Section~\ref{sec:proofs}.

Similarly,  to certain techniques for establishing minimax bounds we will rely on
the existence of appropriate hypercubes of parameters. These hypercubes will
yield multiple testing problems that become significantly easier with additional
observations and thus give a lower bound on the deficiency of interest.

These multiple testing problems can be thought of as consisting of many local hypotheses about the
underlying distribution. An additional observation will allow one to make an
informed guess about the shape of the underlying distribution in an additional
small region of the sample space. The idea is illustrated in
Figure~\ref{fig:lowerboundidea}.

\begin{figure}[ht]
  \centering
  \includegraphics{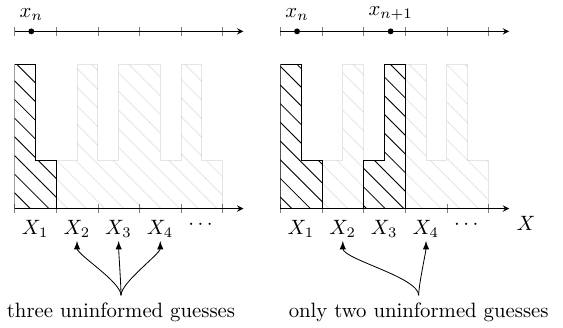}
  \caption{Illustration of the proof of Theorem~\ref{thm:mainlower}, observing
    \(x_{n+1}\) typically makes it viable to estimate the density in one
    additional cell of a partition \(X_{1}, \dotsc, X_{m}\) of the sample space
    \(X\).}
  \label{fig:lowerboundidea}
\end{figure}

The relevant notion of locality is captured by re-parameterising the experiment in
an appropriate way, as described by the following lemma.

\begin{samepage}
\begin{lemma}\label{lemma:hypercube}

  Let \(\mathcal{E} = (X, P_{\theta}; \theta \in \Theta)\) be an experiment on a space \(X\), and
  for some \(m \in \N\) let \(\{X_{1}, \dotsc, X_{m}\}\) be a partition of \(X\)
  such that for all \(i = 1, \dotsc, m\) and \(\theta \in \Theta\) one has
  \(P_{\theta}(X_{i}) > 0\). Also define for each \(i = 1, \dotsc, m\) and
  \(\theta \in \Theta\)
  \begin{equation*}
    P_{i,\theta}(A) = P_{\theta}(A \mid X_{i}) \qquad \Theta_{i} = \{ P_{i,\theta} \mid \theta \in \Theta \}
  \end{equation*}
  as well as \(\psi \colon \Theta \to \Delta_{m} \times \Theta_{1} \times
  \dotsb \times \Theta_{m}\) by
  \begin{equation*}
    \psi(\theta)
    =
    ((P_{\theta}(X_{1}), \dotsc, P_{\theta}(X_{m})), P_{1,\theta}, \dotsc, P_{m,\theta})
  \end{equation*}
  and let \(\Psi = \psi(\Theta)\). Then the experiment
  \begin{equation}\label{eq:splitexperiment}
    \mathcal{F} = 
    \Bigl(\bigoplus_{i=1}^{m}X_{i}, \bigoplus_{i=1}^{m}p_{i}P_{i} ; ((p_{1}, \dotsc, p_{m}), P_{1}, \dotsc, P_{m}) \in \Psi\Bigr)
  \end{equation}
  is such that for any pair \(n,n' \in \N\) one has
  \begin{equation*}
    \delta(\mathcal{E}^{\otimes n}, \mathcal{E}^{\otimes n'})
    =
    \delta(\mathcal{F}^{\otimes n}, \mathcal{F}^{\otimes n'}).
  \end{equation*}
\end{lemma}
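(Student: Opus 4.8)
The plan is to recognise $\mathcal{F}$ as $\mathcal{E}$ written in new coordinates — the sample space relabelled by a bijection and the index set by the map $\psi$ — and then to use that the deficiency between two repetitions, being an infimum over Markov kernels $K$ of a supremum over the parameter of a total variation distance between the transported and the target families (the standard characterisation already used for the upper bound, see \cite[Theorem~6.2.4]{Torgersen1991}), is unaffected by both kinds of relabelling.

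First I would note that, since $\{X_{1}, \dotsc, X_{m}\}$ is a partition into pairwise disjoint measurable pieces, the map $\iota \colon \bigoplus_{i=1}^{m} X_{i} \to X$ equal to the canonical inclusion on each summand is a bimeasurable bijection: a set $B$ in the direct sum is measurable exactly when $B \cap X_{i}$ lies in the subspace $\sigma$-algebra of $X_{i}$ for each $i$, which, as every $X_{i}$ is a measurable subspace of $X$, is the same as $\iota(B) \in \mathcal{A}$. Then comes the one substantive computation: if $((p_{1}, \dotsc, p_{m}), P_{1}, \dotsc, P_{m}) = \psi(\theta)$ then $p_{i} = P_{\theta}(X_{i})$ and $P_{i}(A) = P_{\theta}(A \mid X_{i})$, so the $i$-th summand $p_{i}P_{i}$ of $\bigoplus_{i=1}^{m} p_{i}P_{i}$ is the measure $A \mapsto P_{\theta}(A \cap X_{i})$ on $X_{i}$; pushing these forward along $\iota$ and summing over $i$ — legitimate precisely because the $X_{i}$ are disjoint and cover $X$ — yields $\iota^{\ast}\bigl(\bigoplus_{i=1}^{m} p_{i}P_{i}\bigr) = P_{\theta}$. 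In particular the family of measures of $\mathcal{F}$, carried over by $\iota$, is exactly $\{P_{\theta} \mid \theta \in \Theta\}$ now indexed by $\Psi = \psi(\Theta)$, and equal values of $\psi$ force equal values of $P_{\theta}$.

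For the deficiency identity I would apply $\iota$ componentwise. For every $k$ the map $\iota^{\otimes k} \colon (\bigoplus_{i=1}^{m} X_{i})^{\otimes k} \to X^{\otimes k}$ is again a bimeasurable bijection and, push-forward commuting with tensor products, sends $\bigl(\bigoplus_{i=1}^{m} p_{i}P_{i}\bigr)^{\otimes k}$ to $P_{\theta}^{\otimes k}$ whenever $\psi(\theta) = ((p_{i}), (P_{i}))$. Conjugating with the pair $\iota^{\otimes n}, \iota^{\otimes n'}$ gives a bijection between Markov kernels $K' \colon (\bigoplus_{i=1}^{m} X_{i})^{\otimes n} \to_{k} (\bigoplus_{i=1}^{m} X_{i})^{\otimes n'}$ and Markov kernels $K \colon X^{\otimes n} \to_{k} X^{\otimes n'}$, and, since a bimeasurable bijection acts as an isometry for total variation, $\|K'\bigl(\bigoplus_{i=1}^{m} p_{i}P_{i}\bigr)^{\otimes n} - \bigl(\bigoplus_{i=1}^{m} p_{i}P_{i}\bigr)^{\otimes n'}\| = \|K P_{\theta}^{\otimes n} - P_{\theta}^{\otimes n'}\|$ whenever $\psi(\theta) = ((p_{i}), (P_{i}))$. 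Taking the supremum of both sides over the parameter and using that $\psi$ maps $\Theta$ onto $\Psi$ identifies the supremum over $\Psi$ on the left with $\sup_{\theta \in \Theta}\|K P_{\theta}^{\otimes n} - P_{\theta}^{\otimes n'}\|$ on the right; taking the infimum over $K'$, equivalently over $K$, then gives $\delta(\mathcal{F}^{\otimes n}, \mathcal{F}^{\otimes n'}) = \delta(\mathcal{E}^{\otimes n}, \mathcal{E}^{\otimes n'})$ for every pair $n, n' \in \N$.

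The bimeasurability of $\iota$ and the compatibility of push-forward with tensor products are routine bookkeeping; the step that wants care is the last one, pairing up kernels and suprema simultaneously across the sample-space isomorphism and the reparameterisation of the index, together with being explicit about which standard characterisation of deficiency is invoked (and that it is being used in both directions, not just as an upper bound).
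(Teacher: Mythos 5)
Your proposal is correct and follows essentially the same route as the paper: exhibit the canonical map between $X$ and $\bigoplus_{i=1}^{m}X_{i}$ as a bimeasurable bijection carrying $P_{\theta}$ to $\bigoplus_{i}p_{i}P_{i}$, then transfer the kernel characterisation of deficiency across this relabelling and the reparameterisation by $\psi$. The only organisational difference is that the paper isolates the invariance-under-relabelling step as a separate lemma (Lemma~\ref{lemma:csquare}) while you inline the kernel-conjugation argument.
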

\end{samepage}

The proof is theoretically trivial but in practice somewhat technical and can be
found in Section~\ref{sec:proofs}. One may think of the new parameterisation in terms of \(\Psi \subset \Delta_{m} \times \Theta_{1}
\times \dotsb \times \Theta_{n}\) specified by \(\psi\) as essentially
nothing but the law of total probability. It makes explicit how each
\(P_{\theta}\) decomposes into the coarse structure given by
\((P_{\theta}(X_{1}), \dotsc, P_{\theta}(X_{m})) \in \Delta_{m}\) and the local
pieces \(P_{\theta_{1}}, \dotsc, P_{\theta_{m}}\) within each cell \(X_{1},
\dotsc, X_{m}\) of the partition.

In particular, if \(p = (p_{1}, \dotsc, p_{m}) \in \Delta_{m}\) is fixed the
experiment in Equation~(\ref{eq:splitexperiment}) is a mixture \(\mathcal{F} =
\sum_{i=1}^{m}p_{i}\mathcal{F}_{i}\) where \(\mathcal{F}_{i} = (X_{i},
P_{i,\theta_{i}}; (\dotsc, \theta_{i}, \dotsc) \in \Psi)\) depends only on
\(\theta_{i}\). For such a mixture \(\mathcal{F}\), receiving a number of
observations can be thought of as observing a randomised smaller number of
observations for each one of \(\mathcal{F}_{1}, \dotsc, \mathcal{F}_{m}\). This
notion is formalised by the following lemma, which is nothing but a multinomial
theorem for experiments.

\begin{lemma}\label{lemma:mixedpower}
  For \(n, m \in \N\) let
  \begin{equation*}
    \mathcal{E}_{1} = (X_{1}, P_{1,\theta} ; \theta \in
    \Theta), \dotsc, \mathcal{E}_{m} = (X_{m}, P_{m,\theta} ; \theta \in \Theta)
  \end{equation*}
  be experiments on the same parameter space \(\Theta\), \(p = (p_{1}, \dotsc,
  p_{m}) \in \Delta_{m}\), and denote \(q(n_{1}, \dotsc, n_{m}) = \Prob(\Mult(n,
  p_{1}, \dotsc, p_{m}) = (n_{1},\dotsc,n_{m}))\). Then
  \begin{equation*}
    \Bigl(
    \sum_{i=1}^{m}p_{i}\mathcal{E}_{i}
    \Bigr)^{\otimes n}
    \cong
    \sum_{n_{1}, \dotsc, n_{m}} q(n_{1}, \dotsc, n_{m}) \, \mathcal{E}_{1}^{\otimes n_{1}} \otimes \dotsb \otimes \mathcal{E}_{m}^{\otimes n_{m}}.
  \end{equation*}
\end{lemma}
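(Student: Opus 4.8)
The plan is to unwind both sides as explicit experiments and exhibit an isomorphism (mutual sufficiency) between them via the canonical map that records, for a sequence of $n$ observations from $\sum_i p_i \mathcal{E}_i$, which cell each observation landed in together with the sub-sequences landing in each cell. Concretely, an observation from $\sum_i p_i \mathcal{E}_i$ is a point of $\bigoplus_{i=1}^m X_i$, i.e.\ a pair $(i, x)$ with $x \in X_i$; under $\theta$ its law is: pick $i$ with probability $p_i$, then draw $x \sim P_{i,\theta}$. So $(\sum_i p_i \mathcal{E}_i)^{\otimes n}$ has sample space $(\bigoplus_i X_i)^{\otimes n}$ and, under $\theta$, the law of $((I_1, \xi_1), \dotsc, (I_n, \xi_n))$ where the $I_k$ are i.i.d.\ with $\Prob(I_k = i) = p_i$ and, conditionally on $(I_1, \dotsc, I_n)$, the $\xi_k$ are independent with $\xi_k \sim P_{I_k, \theta}$.

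First I would define the measurable map $f$ from $(\bigoplus_i X_i)^{\otimes n}$ to the disjoint union $\bigoplus_{n_1 + \dots + n_m = n} X_1^{\otimes n_1} \otimes \dotsb \otimes X_m^{\otimes n_m}$ (indexed also by enough combinatorial data to make it injective, see below) that sends $((i_1, x_1), \dotsc, (i_n, x_n))$ to the tuple $(n_1, \dotsc, n_m)$ together with, for each cell $j$, the subsequence of those $x_k$ with $i_k = j$, listed in increasing order of $k$. The push-forward of $P_\theta$-law of the left side under $f$ is, by the grouping property of i.i.d.\ draws, exactly $\sum_{n_1, \dots, n_m} q(n_1, \dotsc, n_m)\, P_{1,\theta}^{\otimes n_1} \otimes \dotsb \otimes P_{m,\theta}^{\otimes n_m}$, which is the law defining the right-hand experiment: the multinomial weights $q(n_1, \dotsc, n_m)$ are precisely the probability that the counts $(\#\{k : i_k = 1\}, \dotsc, \#\{k : i_k = m\})$ equal $(n_1, \dotsc, n_m)$, and conditionally on those counts the retained coordinates in cell $j$ are i.i.d.\ $P_{j,\theta}$. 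This shows $f$ is a statistic carrying the left experiment onto the right, hence the right experiment is a randomisation (image) of the left, giving $\delta \le 0$ in one direction.

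For the reverse direction I would exhibit a (deterministic or, if convenient, randomised) kernel going back. The subtlety is that $f$ as described forgets the \emph{positions} $i_1, \dotsc, i_n$ of the cell labels within the original sequence, so to get a genuine isomorphism one has two options: (a) include in the index set of the right-hand disjoint sum the full position pattern $(i_1, \dotsc, i_n) \in \{1,\dotsc,m\}^n$ (not just the counts), which makes $f$ a bijection onto a space whose $P_\theta$-law is a re-indexed version of the same mixture — $q(n_1, \dotsc, n_m)$ splits evenly across the $\binom{n}{n_1, \dots, n_m}$ patterns with those counts, and one then checks that re-indexing the direct summands of an experiment by a measurable relabelling leaves the experiment invariant up to $\cong$; or (b) keep the counts-only index and observe that the map forgetting positions is itself a sufficient statistic, because the positions are ancillary — their conditional law given the cell-labelled data does not depend on $\theta$ — so by the standard fact that a sufficient statistic yields an equivalent experiment (cited in the Notation section), no information is lost. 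Either route closes the loop; I would take route (b), invoking that conditioning out a $\theta$-free randomisation preserves equivalence, since it keeps the index set as stated in the lemma.

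I expect the only real obstacle to be bookkeeping: being careful that the direct-sum (mixture-of-disjoint-supports) experiment on the left really does decompose as claimed, i.e.\ that observing a point of $\bigoplus_i X_i$ is operationally ``observe which cell, then observe within that cell from $\mathcal{E}_i$'', and that the $n$-fold product of this equals the multinomial mixture of the tensor products. None of the steps needs a hard inequality or a deep theorem; the content is the identification of $q(n_1, \dotsc, n_m)$ as the multinomial weights and the observation that the within-cell order and the cross-cell positions are $\theta$-free, hence sufficiency-irrelevant. I would write the proof by first fixing $\theta$, computing the push-forward law under $f$ as above, then noting the computation is uniform in $\theta$ so $f$ is sufficient, and finally citing Lemma~\ref{lemma:hypercube}'s companion fact that a sufficient statistic gives an equivalent experiment to conclude $\cong$ and therefore the displayed equivalence (and, a fortiori, equality of all deficiencies against any third experiment).
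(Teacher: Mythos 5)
Your proposal is correct and follows essentially the same route as the paper: the stable sort-by-cell map, the identification of its push-forward as the multinomial mixture of product measures, and a sufficiency argument to recover the reverse direction. The only (inessential) difference is at the last step, where the paper composes with an explicit reassembly map and invokes sufficiency of the empirical measure for the \(n\)-fold product, while your route (b) argues sufficiency of the sorting map directly from the \(\theta\)-freeness of the conditional law of the cell-label positions --- both rest on the same observation that the discarded positional information is uninformative about \(\theta\).
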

The proof is uninteresting but requires a bit of bookkeeping, and is therefore
postponed until Section~\ref{sec:proofs}.

For such experiments we may derive a lower bound in terms of the difficulty of
testing problems in the individual experiments.

\begin{lemma}\label{lemma:multitestbound}
  Fix some \(m \in \N\), parameter space \(\Theta \subset \Theta_{1} \times
  \dotsb \times \Theta_{m}\), a sequence of experiments
  \begin{equation*}
    \begin{split}
      \makebox[\widthof{\(\mathcal{E}_{m}\)}][c]{\(\mathcal{E}_{1}\)} &= (\makebox[\widthof{\(X_{m}\)}][c]{\(X_{1}\)}, \makebox[\widthof{\(P_{m,\theta_{m}}\)}][c]{\(P_{1,\theta_{1}}\)} ; (\theta_{1}, \dotsc, \theta_{m}) \in \Theta),\\
      &\vdotswithin{=}\\
      \mathcal{E}_{m} &= (X_{m}, P_{m,\theta_{m}} ; (\theta_{1}, \dotsc, \theta_{m}) \in \Theta),
    \end{split}
  \end{equation*}
  where each family \((P_{j,\theta_{j}})_{\theta \in \Theta}\) depends only on
  \(\theta_{j}\), and there exist pairs \(\theta_{1,0}, \theta_{1,1} \in \Theta_{1}\),
  \ldots{}, \(\theta_{m,0}, \theta_{m,1} \in \Theta_{m}\) such that
  \(\{\theta_{1,0}, \theta_{1,1}\} \times \dotsb \times \{\theta_{m,0},
  \theta_{m,1}\} \subset \Theta\). For \(q, q' \colon \N^{m} \to [0, 1]\) two
  probability mass functions on \(\N^{m}\) define experiments
  \begin{equation*}
    \begin{split}
      \makebox[\widthof{\(\mathcal{E}'\)}][c]{\(\mathcal{E}\)} &=
      \sum_{n_{1}, \dotsc, n_{m}} \makebox[\widthof{\(q'(n_{1}, \dotsc, n_{m})\)}][c]{\(q(n_{1}, \dotsc, n_{m})\)}
      \,\mathcal{E}_{1}^{\otimes n_{1}} \otimes \dotsb \otimes \mathcal{E}_{m}^{\otimes n_{m}},
      \\
      \mathcal{E}' &=
      \sum_{n_{1}, \dotsc, n_{m}} q'(n_{1}, \dotsc, n_{m})
      \,\mathcal{E}_{1}^{\otimes n_{1}} \otimes \dotsb \otimes \mathcal{E}_{m}^{\otimes n_{m}}.
    \end{split}
  \end{equation*}

  Fixing some priors \(\pi_{1}, \dotsc, \pi_{m}\) supported on the pairs of
  parameters \(\{\theta_{1,0}, \theta_{1,1}\}, \dotsc, \{\theta_{m,0},
  \theta_{m,1}\}\) let \(r_{i}(n)\) denote the minimum Bayes risks of testing
  \(P_{i,\theta_{i,0}}^{\otimes n}\) against \(P_{i,\theta_{i,1}}^{\otimes n}\)
  with respect to \(\pi_{i}\).
  
  If \((N_{1}, \dotsc, N_{m})\) and \((N'_{1}, \dotsc, N'_{m})\) are distributed
  according to \(q\) and \(\mathrlap{q'}\phantom{q}\), respectively, it then
  holds for any \(l=1, \dotsc, m\) that
  \begin{equation}\label{eq:cubelower}
    \begin{split}
      \delta(\mathcal{E}, \mathcal{E}')
      \geq\,&
      \Expect \Bigl( 
      \Prob(\PBin(r_{1}(N_{1}), \dotsc, r_{m}(N_{m})) \geq l)
      \Bigr)
      \\&-
      \Expect \Bigl( 
      \Prob(\PBin(r_{1}(N'_{1}), \dotsc, r_{m}(N'_{m})) \geq l)
      \Bigr).
    \end{split}
  \end{equation}
\end{lemma}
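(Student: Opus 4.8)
The plan is to bound $\delta(\mathcal{E}, \mathcal{E}')$ from below through a single well-chosen finite decision problem. Recall that for any finite normalised decision problem $(A, L)$ on $\Theta$ and any prior $\pi$, the randomisation criterion (e.g.\ Torgersen\cite[Theorem~6.2.4]{Torgersen1991}) lets one match, on $\mathcal{E}$, any procedure for $\mathcal{E}'$ with an extra risk of at most $\delta(\mathcal{E}, \mathcal{E}')$, uniformly in $\theta$; applying this to a procedure that is Bayes for $(A, L)$ with respect to $\pi$ on $\mathcal{E}'$ and averaging over $\pi$ gives
\begin{equation*}
  \delta(\mathcal{E}, \mathcal{E}') \geq \beta_{\mathcal{E}}(A, L, \pi) - \beta_{\mathcal{E}'}(A, L, \pi),
\end{equation*}
where $\beta_{\mathcal{E}}(A, L, \pi)$ denotes the minimum Bayes risk of $(A, L)$ with respect to $\pi$ on observing $\mathcal{E}$. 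It therefore suffices to pick $(A, L)$ and $\pi$ making $\beta_{\mathcal{E}}$ and $\beta_{\mathcal{E}'}$ equal to the two terms on the right of \eqref{eq:cubelower}. I would take the \emph{simultaneous testing} problem: action space $A = \{0, 1\}^{m}$; prior $\pi = \pi_{1} \otimes \dotsb \otimes \pi_{m}$, supported on the hypercube $\{\theta_{1,0}, \theta_{1,1}\} \times \dotsb \times \{\theta_{m,0}, \theta_{m,1}\} \subset \Theta$; and the loss $L(a, \theta) = \mathbf{1}[\#\{j : a_{j} \neq i_{j}\} \geq l]$, where for $\theta$ in the hypercube $i_{j} \in \{0, 1\}$ is determined by $\theta_{j} = \theta_{j, i_{j}}$ and $L$ is extended arbitrarily off the hypercube (where $\pi$ puts no mass). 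This is a finite decision problem with $\{0,1\}$-valued, hence normalised, loss.

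Next I would compute $\beta_{\mathcal{E}}(A, L, \pi)$. By construction (cf.\ Lemma~\ref{lemma:mixedpower}) observing $\mathcal{E}$ amounts to observing the count vector $(N_{1}, \dotsc, N_{m}) \sim q$ --- which is recorded, being the index of the mixture component --- together with independent observations of $\mathcal{E}_{1}^{\otimes N_{1}}, \dotsc, \mathcal{E}_{m}^{\otimes N_{m}}$. Writing $Z$ for the whole observation, $\beta_{\mathcal{E}}(A, L, \pi) = \Expect \min_{a \in A} \Expect(L(a, \theta) \mid Z)$. Condition on $(N_{1}, \dotsc, N_{m}) = (n_{1}, \dotsc, n_{m})$ and on the data. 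Because $\pi$ is a product, each $\mathcal{E}_{j}$ depends only on $\theta_{j}$, and the counts are observed, the posterior of $\theta$ given $Z$ factorises over coordinates; let $\Pi_{j}$ be the posterior probability that $\theta_{j} = \theta_{j,1}$ (a function of the cell-$j$ data and $n_{j}$). Then, conditionally, the events that the guesses $a_{j}$ are wrong are independent with probabilities $e_{j}(a_{j})$ equal to $\Pi_{j}$ when $a_{j} = 0$ and to $1 - \Pi_{j}$ when $a_{j} = 1$, so $\Expect(L(a, \theta) \mid Z) = \Prob(\PBin(e_{1}(a_{1}), \dotsc, e_{m}(a_{m})) \geq l)$.

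The computation is then closed using two elementary properties of $\phi(p_{1}, \dotsc, p_{m}) = \Prob(\PBin(p_{1}, \dotsc, p_{m}) \geq l)$. First, conditioning on the first coordinate, $\phi(p) = p_{1}\Prob(\PBin(p_{2}, \dotsc, p_{m}) \geq l - 1) + (1 - p_{1})\Prob(\PBin(p_{2}, \dotsc, p_{m}) \geq l)$, so $\phi$ is affine in each coordinate with nonnegative slope $\Prob(\PBin(p_{2}, \dotsc, p_{m}) = l - 1)$, hence nondecreasing in each coordinate. Consequently $\min_{a} \phi(e_{1}(a_{1}), \dotsc, e_{m}(a_{m})) = \phi(E_{1}, \dotsc, E_{m})$ with $E_{j} = \min(\Pi_{j}, 1 - \Pi_{j})$; that is, the coordinatewise Bayes test is simultaneously Bayes, and $E_{j}$ is exactly the conditional error probability of the Bayes test for coordinate $j$. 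Second, $\phi$ is multilinear, and $E_{1}, \dotsc, E_{m}$ are conditionally independent given $(n_{1}, \dotsc, n_{m})$ (as the data blocks are), so the conditional expectation passes through: $\Expect(\phi(E_{1}, \dotsc, E_{m}) \mid (n_{1}, \dotsc, n_{m})) = \phi(\Expect(E_{1} \mid n_{1}), \dotsc, \Expect(E_{m} \mid n_{m}))$, and $\Expect(E_{j} \mid n_{j}) = r_{j}(n_{j})$ by definition of $r_{j}$. Averaging over $(N_{1}, \dotsc, N_{m}) \sim q$ yields $\beta_{\mathcal{E}}(A, L, \pi) = \Expect(\Prob(\PBin(r_{1}(N_{1}), \dotsc, r_{m}(N_{m})) \geq l))$; running the same argument with $q'$ gives the analogous formula for $\beta_{\mathcal{E}'}$, and subtracting is exactly \eqref{eq:cubelower}.

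The step I expect to be the crux is the factorisation of the posterior together with its consequences: it is here that both the product form of $\pi$ and the fact that the mixture component --- hence the count vector --- is observed are indispensable (without the latter the data blocks need not be conditionally independent and the clean formula breaks), and it is here that monotonicity and multilinearity of the Poisson-binomial survival function are used to make \emph{being Bayes in each cell separately} globally optimal and to let the expectation over the data pass exactly through $\phi$. The remaining points (existence and measurability of Bayes procedures, regular conditional distributions, and that $(A, L)$ is finite and normalised) are routine.
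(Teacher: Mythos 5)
Your proposal is correct and follows essentially the same route as the paper: the same simultaneous-testing problem with action space \(\{0,1\}^{m}\), the same product prior on the hypercube, the same Bayes-risk lower bound on the deficiency, and the same key facts (posterior factorisation given the observed mixture component, coordinatewise monotonicity and multilinearity of \(p \mapsto \Prob(\PBin(p_{1},\dotsc,p_{m}) \geq l)\)). The only difference is organisational: the paper packages the Bayes-risk computation into two auxiliary lemmas (one for product experiments, one for mixtures), whereas you carry out the posterior computation directly in one pass.
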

Due to its relative length, the proof is found in Section~\ref{sec:proofs}.
Conceptually, it captures exactly what was illustrated in
Figure~\ref{fig:lowerboundidea}, with each parameter \(r_{i}(N_{i})\)
corresponding to the probability of making an incorrect decision about the
underlying distribution within the cell \(X_{i}\) of a partition \(\{X_{1},
\dotsc, X_{m}\}\) of the sample space \(X\). If \(q'\) describes a distribution that is in
some sense larger than the one described by \(q\) then \(N'_{i}\) will tend to
be greater than \(N_{i}\) and the risk \(r_{i}(N'_{i})\) will tend to be
smaller.

In our case, controlling the right-hand side in Equation~\ref{eq:cubelower}
will boil down to proving that such mixtures of Poisson-Binomial distributions
are concentrated enough for there to exist outcomes with probabilities at least
on the order of \(1/\sqrt{m}\). For this, we need the following two
simple properties.
\begin{lemma}\label{lemma:pbindiff}

  Let \(n \in \N\), \(i = 1,\dotsc,n\), and \(p_{1}, \dotsc, p_{i}, p_{i}',
  \dotsc, p_{n+1} \in [0,1]\) where \(p_{i} > p'_{i}\). Then for any \(l = 1,
  \dotsc, n\)
  \begin{equation*}
    \begin{split}
      &\Prob(\PBin(p_{1}, \dotsc, p_{i}, \dotsc, p_{n+1}) \geq l)
      -
      \Prob(\PBin(p_{1}, \dotsc, p'_{i}, \dotsc, p_{n+1}) \geq l)
      =
      \\
      &\qquad=
      \Prob(\PBin(p_{1}, \dotsc, p_{i-1}, p_{i+1}, \dotsc, p_{n+1})
      =
      (l-1))(p_{i}-p'_{i}).
    \end{split}
  \end{equation*}
\end{lemma}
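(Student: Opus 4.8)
The plan is to condition on all the Bernoulli variables except the $i$-th one and reduce the statement to a single-variable computation. Let $I_1, \dotsc, I_{n+1}$ be independent Bernoulli variables with $\Prob(I_j = 1) = p_j$ for $j \neq i$, and let $I_i$ be Bernoulli with success probability $p_i$ and $I_i'$ Bernoulli with success probability $p_i'$, all independent. Write $S = \sum_{j \neq i} I_j$, so that $S$ follows the Poisson-binomial law with parameters $(p_1, \dotsc, p_{i-1}, p_{i+1}, \dotsc, p_{n+1})$ on $\{0, \dotsc, n\}$. Then $\PBin(p_1, \dotsc, p_i, \dotsc, p_{n+1})$ is the law of $S + I_i$ and $\PBin(p_1, \dotsc, p_i', \dotsc, p_{n+1})$ is the law of $S + I_i'$.

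First I would write, for the left-hand side,
\begin{equation*}
  \Prob(S + I_i \geq l) - \Prob(S + I_i' \geq l)
  =
  \Expect\bigl( \Prob(S + I_i \geq l \mid S) - \Prob(S + I_i' \geq l \mid S) \bigr),
\end{equation*}
using independence of $S$ from $I_i$ and $I_i'$. For a fixed value $s$ of $S$, the inner difference is
\begin{equation*}
  \bigl( p_i \mathbf{1}[s + 1 \geq l] + (1 - p_i)\mathbf{1}[s \geq l] \bigr)
  -
  \bigl( p_i' \mathbf{1}[s + 1 \geq l] + (1 - p_i')\mathbf{1}[s \geq l] \bigr)
  =
  (p_i - p_i')\bigl( \mathbf{1}[s + 1 \geq l] - \mathbf{1}[s \geq l] \bigr),
\end{equation*}
and $\mathbf{1}[s + 1 \geq l] - \mathbf{1}[s \geq l] = \mathbf{1}[s = l - 1]$, since the two indicators differ exactly when $s < l \leq s + 1$, i.e.\ when $s = l - 1$ (here $1 \leq l \leq n$ guarantees $l - 1 \in \{0, \dotsc, n-1\}$ is in the support range of $S$). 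Taking the expectation over $S$ turns this into $(p_i - p_i') \Prob(S = l - 1)$, which is exactly the claimed right-hand side.

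There is essentially no obstacle here: the only thing to be careful about is the range of $l$, so that $l - 1$ is a legitimate value of the indicator comparison and the telescoping of the two step functions is clean; the hypothesis $l \in \{1, \dotsc, n\}$ handles this. The identity does not actually use $p_i > p_i'$ (it holds for the signed difference regardless), but that hypothesis is what makes the right-hand side non-negative, which is how the lemma will be applied in the proof of Theorem~\ref{thm:mainlower}.
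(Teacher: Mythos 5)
Your proof is correct and follows essentially the same route as the paper: both arguments isolate the $i$-th Bernoulli variable, reduce the difference of the two tail probabilities to a one-variable computation against the sum $S$ of the remaining indicators, and arrive at $(p_i - p_i')\Prob(S = l-1)$. The only cosmetic difference is that the paper couples the two versions of the $i$-th indicator through a shared uniform variable and partitions on its value, whereas you condition on $S$ directly; your observation that the hypothesis $p_i > p_i'$ is not needed for the identity itself is also accurate.
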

The proof is a simple calculation and can be found in Section~\ref{sec:proofs}.

\begin{lemma}\label{claim:mixedpbin}
  For any \(m \in \N\), \(n > 0\), \((p_{1}, \dotsc, p_{m}) \in \Delta_{m}\),
  and any sequence of (all non-decreasing or all non-increasing) monotone functions \(f_{1}, \dotsc, f_{m} : \N \to [0, 1]\) there
  exists a \(k \in \{0, 1, \dotsc, m\}\) such that for \((N_{1}, \dotsc, N_{m})
  \sim \Mult(n, p_{1}, \dotsc, p_{m})\) one has
  \begin{equation*}
    \Expect(\Prob(\PBin(f_{1}(N_{1}), \dotsc, f_{m}(N_{m})) = k)) \geq \frac{1}{3\sqrt{m}}.
  \end{equation*}
\end{lemma}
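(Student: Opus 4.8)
The plan is to analyse the single integer-valued random variable $Y$ obtained by first drawing $(N_1, \dots, N_m) \sim \Mult(n, p_1, \dots, p_m)$ and then, conditionally on $N = (N_1, \dots, N_m)$, drawing $Y \sim \PBin(f_1(N_1), \dots, f_m(N_m))$; since $\Prob(Y = k) = \Expect\big(\Prob(\PBin(f_1(N_1), \dots, f_m(N_m)) = k)\big)$ for every $k$, it suffices to exhibit $k \in \{0, \dots, m\}$ with $\Prob(Y = k) \ge \tfrac{1}{3\sqrt m}$. I would write $Y = \sum_{i=1}^m I_i$, where given $N$ the $I_i$ are independent, $I_i \sim \Bin(1, f_i(N_i))$, and $I_i$ depends on $N$ only through $N_i$. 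The argument then splits into (i) a bound $\operatorname{Var}(Y) \le m/4$ and (ii) a Chebyshev-plus-pigeonhole step converting it into the desired atom. I would carry this out under the assumption, which is what is used in the application (where the $f_i$ are minimum Bayes risks, hence non-increasing), that the $f_i$ are monotone in a common direction, say all non-increasing; allowing the $f_i$ to point in opposite directions appears to require a different treatment.

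For (i) the plan is to condition on $N$ and use $\operatorname{Var}(Y) = \Expect\big(\operatorname{Var}(Y \mid N)\big) + \operatorname{Var}\big(\Expect(Y \mid N)\big)$, where $\operatorname{Var}(Y \mid N) = \sum_i f_i(N_i)(1 - f_i(N_i))$ by conditional independence and $\Expect(Y \mid N) = \sum_i f_i(N_i)$. The crucial point is that $\operatorname{Var}\big(\sum_i f_i(N_i)\big) \le \sum_i \operatorname{Var}(f_i(N_i))$: this follows from the classical fact that for a multinomial vector $\operatorname{Cov}(g(N_i), h(N_j)) \le 0$ whenever $i \ne j$ and $g, h$ are both non-decreasing (negative association of the components), applied to $g = f_i$, $h = f_j$ (or to $-f_i$, $-f_j$, hence the requirement that all $f_i$ be monotone in the same direction). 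Combining this with the elementary identity $\Expect(Z(1-Z)) + \operatorname{Var}(Z) = \Expect(Z)\big(1 - \Expect(Z)\big)$ applied to each $Z = f_i(N_i) \in [0,1]$ gives
\[
  \operatorname{Var}(Y)
  \le \sum_{i=1}^m \Big( \Expect\big(f_i(N_i)(1 - f_i(N_i))\big) + \operatorname{Var}(f_i(N_i)) \Big)
  = \sum_{i=1}^m \Expect(f_i(N_i))\big(1 - \Expect(f_i(N_i))\big)
  \le \frac{m}{4},
\]
since $x(1-x) \le 1/4$ on $[0,1]$.

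For (ii), set $\mu = \Expect(Y)$ and take $t = \tfrac{1}{2}\sqrt{3m}$; Chebyshev's inequality then gives $\Prob\big(Y \in [\mu - t, \mu + t]\big) \ge 1 - \tfrac{m/4}{t^2} = \tfrac{2}{3}$. This interval has length $\sqrt{3m}$, hence meets at most $\lfloor \sqrt{3m} \rfloor + 1$ integers, so by pigeonhole some integer $k$ in it satisfies $\Prob(Y = k) \ge \tfrac{2}{3(\lfloor \sqrt{3m} \rfloor + 1)}$, and one checks that this exceeds $\tfrac{1}{3\sqrt m}$ once $m$ is not too small (it suffices that $2\sqrt m \ge \lfloor \sqrt{3m} \rfloor + 1$). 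For the finitely many remaining small $m$ the trivial bound $\max_k \Prob(Y = k) \ge \tfrac{1}{m+1}$ (valid because $Y$ takes at most $m+1$ values) already gives $\ge \tfrac{1}{3\sqrt m}$ for $m \le 6$, and the handful of intermediate values are settled by the same Chebyshev argument with a slightly adjusted radius $t$ or by direct computation.

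The step I expect to be the main obstacle is the inequality $\operatorname{Var}\big(\sum_i f_i(N_i)\big) \le \sum_i \operatorname{Var}(f_i(N_i))$ in (i): it is exactly here that the negative dependence of the multinomial has to be made to interact correctly with the $f_i$, and this is the reason the monotonicity hypothesis is needed (to make all the $f_i$ comonotone, so that the negatively associated coordinates produce non-positive covariances). Everything after the variance bound is bookkeeping, the only mild irritation being that the Chebyshev-plus-pigeonhole estimate matches the stated constant $\tfrac{1}{3}$ only once $m$ is past a small threshold, so a few initial values of $m$ must be disposed of by hand.
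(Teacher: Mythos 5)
Your proposal is correct, and it reaches the same destination by a genuinely different route than the paper. The paper's proof is a \emph{double Hoeffding} argument: it first shows that, conditionally on \(N\), the Poisson-binomial variable concentrates in a window of width \(O(\sqrt{tm})\) around its conditional mean \(M = \sum_i f_i(N_i)\), then uses negative association of the multinomial (via Dubhashi--Ranjan) to get a Hoeffding bound for \(M\) around \(\mu' = \Expect(M)\), multiplies the two success probabilities, and pigeonholes over the combined window, finally optimising the two tail parameters at \(s = t = 1.6\). You instead collapse the two concentration steps into a single second-moment computation: the law of total variance plus the identity \(\Expect(Z(1-Z)) + \operatorname{Var}(Z) = \Expect(Z)(1-\Expect(Z))\) gives the clean bound \(\operatorname{Var}(Y) \leq m/4\) \emph{uniformly in the \(f_i\)}, with negative association entering only to discard the cross-covariances in \(\operatorname{Var}(M)\) — exactly the same place, and for exactly the same reason (the \(f_i\) must be comonotone), that the paper needs it. Chebyshev is then enough, since a pigeonhole over \(O(\sqrt{m})\) integers cannot exploit exponential tails anyway. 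What your version buys is transparency of the constant and a tight, closed-form variance bound; what it costs is the boundary bookkeeping: with \(t = \tfrac12\sqrt{3m}\) the inequality \(2\sqrt{m} \geq \lfloor\sqrt{3m}\rfloor + 1\) fails only at \(m \in \{2,3,6,12\}\) among \(m \leq 13\), of which \(m \leq 6\) is covered by your trivial \(1/(m+1)\) bound and \(m = 12\) genuinely needs the slightly shrunken radius (e.g.\ \(t = 2.99\) gives mass \(\geq 0.66\) on at most \(6\) integers, beating \(1/(3\sqrt{12}) \approx 0.096\)); you should spell that one case out rather than leave it as "direct computation". Your explicit flag that the \(f_i\) must all be monotone in the same direction is apt — the paper's proof makes the identical assumption ("either all decreasing or all increasing"), and the application only ever uses non-increasing Bayes risks.
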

Also this proof can be found in Section~\ref{sec:proofs}. We now have all the pieces necessary to prove our lower bound.

\begin{proof}[Proof of Theorem~\ref{thm:mainlower}]
  Let \(n \in \N\), \(\alpha > 0\), and \(\mathcal{E} = (X, P_{\theta} ; \theta
  \in \Theta)\) be as in the statement. By assumption there exists a partition
  \(X_{1}, \dotsc, X_{2n}\) of \(X\); \(p = (p_{1}, \dotsc, p_{2n}) \in
  \Delta_{2n}\) with \(p_{1}, \dotsc, p_{2n} > \beta/(2n)\); and probability
  measures \((Q_{1,0}, Q_{1,1}), \dotsc, (Q_{2n,0}, Q_{2n, 1})\) supported on
  \(X_{1}, \dotsc, X_{2n}\) such that the total variation distances
  \(\|Q_{1,0}-Q_{1,1}\|, \dotsc, \|Q_{2n,0}-Q_{2n,1}\|\) are all at least
  \(\alpha\). Moreover, for each \((i_{1}, \dotsc, i_{2n}) \in \{0, 1\}^{2n}\)
  there exists a \(\theta \in \Theta\) such that \(P_{\theta} = p_{1}Q_{1,i_{1}}
  + \dotsb + p_{2n}Q_{2n,i_{2n}}\).

  By Lemma~\ref{lemma:hypercube} we have \(\delta(\mathcal{E}^{\otimes n},
  \mathcal{E}^{\otimes n+1}) = \delta(\mathcal{F}^{\otimes n},
  \mathcal{F}^{\otimes n+1})\), where \(\mathcal{F}\) is as in
  Equation~\ref{eq:splitexperiment} with respect to the partition \(X_{1},
  \dotsc, X_{2n}\). Let \(\mathcal{F}_{p}\) be the restriction of
  \(\mathcal{F}\) to \(\Psi' = \{ (p', P_{1}, \dotsc, P_{n}) \in \Psi \mid p' =
  p \} \subset \Psi\). Since \(\mathcal{F}_{p}\) is a restriction of \(\mathcal{F}\) it follows that \(\delta(\mathcal{F}_{p}^{\otimes n},
  \mathcal{F}_{p}^{\otimes n+1}) \leq \delta(\mathcal{F}^{\otimes n},
  \mathcal{F}^{\otimes n+1})\).

  By definition \(\mathcal{F}_{p} = \sum_{i=1}^{2n} p_{i} \mathcal{F}_{p,i}\)
  where
  \begin{equation*}
    \begin{split}
      \makebox[\widthof{\(\mathcal{F}_{p,2n}\)}][c]{\(\mathcal{F}_{p,1}\)} &= (\makebox[\widthof{\(X_{2n}\)}][c]{\(X_{1}\)}, \makebox[\widthof{\(P_{2n}\)}][c]{\(P_{1}\)} ; (p, P_{1}, \dotsc, P_{2n}) \in
      \Psi')
      \\
      &\vdotswithin{=}
      \\
      \mathcal{F}_{p,2n} &= (X_{2n}, P_{2n} ; (p, P_{1}, \dotsc,
      P_{2n}) \in \Psi').
    \end{split}
  \end{equation*}
  Using Lemma~\ref{lemma:mixedpower} this implies
  \(\delta(\mathcal{F}_{p}^{\otimes n}, \mathcal{F}_{p}^{\otimes n}) =
  \delta(\mathcal{G}, \mathcal{G}')\) where for
  \begin{equation*}
    \begin{split}
      \mathrlap{q}\phantom{q'}(n_{1}, \dotsc, n_{2n}) &= 
      \Prob(\Mult(\makebox[\widthof{\(n+1\)}][c]{\(n\)}, p_{1}, \dotsc, p_{2n}) = (n_{1},\dotsc,n_{2n})),
      \\
      q'(n_{1}, \dotsc, n_{2n}) &=
      \Prob(\Mult(n+1, p_{1}, \dotsc, p_{2n}) = (n_{1},\dotsc,n_{2n})),
    \end{split}
  \end{equation*}
  \(\mathcal{G}\) and \(\mathcal{G}'\) are given by
  \begin{equation*}
    \begin{split}
      \mathrlap{\mathcal{G}}\phantom{\mathcal{G}'} &= 
      \sum_{n_{1}, \dotsc, n_{2n}} \mathrlap{q}\phantom{q'}(n_{1},\dotsc,n_{2n})
      \,\mathcal{F}_{p,1}^{\otimes n_{1}} \otimes \dotsb \otimes \mathcal{F}_{p,2n}^{\otimes n_{2n}},
      \\
      \mathcal{G}' &=
      \sum_{n_{1}, \dotsc, n_{2n}} q'(n_{1},\dotsc,n_{2n})
      \,\mathcal{F}_{p,1}^{\otimes n_{1}} \otimes \dotsb \otimes \mathcal{F}_{p,2n}^{\otimes n_{2n}}.
    \end{split}
  \end{equation*}
  Finally Lemma~\ref{lemma:multitestbound} lets us reduce the
  problem to one of basic probability. Let \(r_{1}, \dotsc, r_{n} \colon \N \to
  [0, 1]\) be the Bayes risks as in Lemma~\ref{lemma:multitestbound} with
  respect to uniform priors. By assumption \(2r_{i}(1) = 1 - \|P_{i,\theta_{i}}
  - P_{i,\theta'_{i}}\|\). Lemma~\ref{lemma:multitestbound} lets us conclude that for any \(l = 0, \dotsc,
  2n-1\), \(N = (N_{1}, \dotsc, N_{2n}) \sim \Mult(n, p_{1}, \dotsc, p_{2n})\),
  and \(N' = (N'_{1}, \dotsc, N'_{2n}) \sim \Mult(n+1, p_{1}, \dotsc, p_{2n})\)
  \begin{equation}\label{eq:mainlowerprob}
    \begin{split}
      \delta(\mathcal{E}^{\otimes n}, \mathcal{E}^{\otimes n+1})
      \geq&
      \Expect \Bigl( 
      \Prob(\PBin(r_{1}(N_{1}), \dotsc, r_{2n}(N_{2n})) \geq l+1)
      \Bigr)
      \\&- 
      \Expect \Bigl( 
      \Prob(\PBin(r_{1}(N'_{1}), \dotsc, r_{2n}(N'_{2n})) \geq l+1)
      \Bigr).
    \end{split}
  \end{equation}

  Let \(I = (I_{1}, \dotsc, I_{2n}) \sim \Mult(1, p_{1}, \dotsc, p_{2n})\) be
  independent of \(N\) and introduce the notations \(R_{i} = r_{i}(N_{i})\) and
  \(R'_{i} = r_{i}(N_{i} + 1)\). The random vector \((N_{1} + I_{1}, \dotsc,
  N_{2n} + I_{2n})\) has the same distribution as \(N'\), so
  \begin{equation*}
    \begin{split}
      &\Expect ( 
      \Prob(\PBin(r_{1}(N'_{1}), \dotsc, r_{2n}(N'_{2n})) \geq l+1)
      )
      =
      \\
      &\qquad= \Expect ( 
      \Prob(\PBin(r_{1}(N_{1}+I_{1}), \dotsc, r_{2n}(N_{2n}+I_{2n})) \geq l+1)
      )
      \\
      &\qquad= \sum_{i=1}^{2n}p_{i}
      \Expect ( 
      \Prob(\PBin(R_{1}, \dotsc, R_{i-1}, R'_{i}, R_{i+1} \dotsc, R_{2n}) \geq l+1)
      ).
    \end{split}
  \end{equation*}

  Plugging the above into Equation~\ref{eq:mainlowerprob} and using
  Lemma~\ref{lemma:pbindiff} gives a lower bound of
  \begin{equation*}
    \sum_{i=1}^{2n}p_{i}
    \Expect ( 
    (R_{i}-R'_{i})
    \Prob(\PBin(R_{1}, \dotsc, R_{i-1}, R_{i+1}, \dotsc, R_{2n}) = l)).
  \end{equation*}
  Let \(Z = \{ i \in \{1, \dotsc, 2n\} \mid N_{i} = 0\}\) be the (random) set of
  indices of zeros in \((N_{1}, \dotsc, N_{2n})\). By assumption of the family
  being rich one has for each \(i=1,\dotsc,2n\) that \(r_{i}(0) - r_{i}(1) \geq
  \alpha/2\) and \(p_{i} > \beta/2n\). In particular, for \(i \in Z\) one has
  \(R_{i} = 1/2\) and \(R_{i} - R'_{i} \geq \alpha/2\). Using these
  inequalities, moving the sum inside the expectation, and truncating the sum to
  indices in \(Z\) gives the following lower bound for \(l = 0, \dotsc, 2n-1\)
  \begin{equation} \label{eq:mainlowerstep1}
    \frac{\alpha\beta}{4n}
    \Expect \Big( 
    \sum_{i \in Z}
    \Prob(\PBin(R_{1}, \dotsc, R_{i-1}, R_{i+1}, \dotsc, R_{2n}) = l)
    \Big). 
  \end{equation}
  Note that for \(l \geq 1\),
  \begin{equation*}
    \begin{split}
      &\Prob(\PBin(R_{1}, \dotsc, R_{i-1}, 1/2, R_{i+1},
      \dotsc, R_{2n}) = l) =
      \\&\quad=\Prob(\PBin(R_{1}, \dotsc, R_{i-1},
      R_{i+1}, \dotsc, R_{2n}) = l)/2
      \\&\qquad+ \Prob(\PBin(R_{1}, \dotsc, R_{i-1}, R_{i+1},
      \dotsc, R_{2n}) = l-1)/2.
    \end{split}
  \end{equation*}
  Using this and taking the average of the lower bound in
  Equation~\ref{eq:mainlowerstep1} with \(l\) as well as with \(l\) replaced by \(l-1\)
  gives
  \begin{equation*}
    \begin{split}
      &\delta(\mathcal{E}^{\otimes n}, \mathcal{E}^{\otimes n+1}) \geq
      \\&\quad\geq
      \frac{\alpha\beta}{4n}
      \Expect \Big( 
      \sum_{i \in Z}
      \Prob(\PBin(R_{1}, \dotsc, R_{i-1}, 1/2, R_{i+1}, \dotsc, R_{2n}) = l)
      \Big)
      \\&\quad=
      \frac{\alpha\beta}{4n}
      \Expect \Big( 
      \sum_{i \in Z}
      \Prob(\PBin(R_{1}, \dotsc, R_{i-1}, R_{i}, R_{i+1}, \dotsc, R_{2n}) = l)
      \Big)
      \\&\quad=
      \frac{\alpha\beta}{4n}
      \Expect ( 
      |Z|
      \Prob(\PBin(R_{1}, \dotsc, R_{2n}) = l)
      )
      \\&\quad\geq
      \frac{\alpha\beta}{4}
      \Expect ( 
      \Prob(\PBin(R_{1}, \dotsc, R_{2n}) = l)
      )
    \end{split}
  \end{equation*}
  where the final inequality is due to the fact that \(N_{1} + \dotsb + N_{2n} = n\) implies
  that at least \(n\) out of \(N_{1}, \dotsc, N_{2n}\) must be \(0\). The
  statement now follows by Lemma~\ref{claim:mixedpbin} since the risks
  \(r_{1}(n_{1}), \dotsc, r_{m}(n_{m})\) are non-increasing in the number of
  observations \(n_{1}, \dotsc, n_{m}\). \qedhere{}
\end{proof}

\section{Remaining proofs}
\label{sec:proofs}

The following lemma formalises the notion that deficiency is invariant under
(bijective) reparameterisation or relabelling of the underlying measurable
space. The implicit notion of isomorphism in the lemma is similar to the one used by McCullagh\cite{mccullagh2002}, but does not require, in their language, a common response scale as well as leaving implicit the category of statistical units and designs.
\begin{lemma}\label{lemma:csquare}

  Let \(\mathcal{E} = (X, P_{\theta} ; \theta \in \Theta)\), \(\mathcal{E}' =
  (X', P'_{\psi} ; \psi \in \Psi)\), \(\mathcal{F} = (Y, Q_{\theta} ; \theta \in
  \Theta)\), and \(\mathcal{F}' = (Y', Q'_{\psi} ; \psi \in \Psi)\) be four
  experiments such that there exist a bijection \(\alpha \colon \Psi \to
  \Theta\) and bimeasurable bijections \(\beta_{X} \colon X \to X'\) and
  \(\beta_{Y} \colon Y \to Y'\) making the following squares commute
  \begin{center}
    \begin{tikzcd}
      \Theta \arrow[r, "\alpha"] \arrow[d, "P"] & \Psi \arrow[d, "P'"]\\
      \mathcal{P}(X) \arrow[r, "\beta_{X}^*"] & \mathcal{P}(X')
    \end{tikzcd} %
    \hspace{1em} %
    \begin{tikzcd}
      \Theta \arrow[r, "\alpha"] \arrow[d, "Q"] & \Psi \arrow[d, "Q'"]\\
      \mathcal{P}(Y) \arrow[r, "\beta_{Y}^*"] & \mathcal{P}(Y')\mathrlap{,}
    \end{tikzcd}
  \end{center}

  where \(P\), \(P'\), \(Q\), and \(Q'\) are used to denote that maps \(\theta \mapsto P_{\theta}\), \(\theta \mapsto P'_{\theta}\), \(\theta \mapsto Q_{\theta}\), and \(\theta \mapsto Q'_{\theta}\), respectively.
  
  It then follows that \(\delta(\mathcal{E}, \mathcal{F}) = \delta(\mathcal{E}',
  \mathcal{F}')\).
\end{lemma}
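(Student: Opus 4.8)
The plan is to transport randomizations back and forth along the given bijections and to observe that this does not change the relevant total variation distances. Recall that \(\delta(\mathcal{E}, \mathcal{F})\) is an infimum, over randomizations \(K\) from \(X\) to \(Y\), of \(\sup_{\theta \in \Theta}\|K P_{\theta} - Q_{\theta}\|\), and likewise for the primed experiments; whichever precise class of randomizations one adopts (Markov kernels \(X \to_{k} Y\), or the transitions of Le~Cam\cite[Definition~2.3.1]{LeCam1986}), the push-forward maps \(\beta_{X}^{\ast}\) and \(\beta_{Y}^{\ast}\) are isometric, positivity- and total-mass-preserving isomorphisms, since \(\beta_{X}\) and \(\beta_{Y}\) are bimeasurable bijections. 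The idea is that conjugation by these maps is a bijection from the admissible randomizations \(X \to Y\) onto the admissible randomizations \(X' \to Y'\), and that it preserves the supremum above; taking infima then forces \(\delta(\mathcal{E}, \mathcal{F}) = \delta(\mathcal{E}', \mathcal{F}')\).

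Carrying this out, I would first read off from the two commuting squares that \(P' \circ \alpha = \beta_{X}^{\ast} \circ P\) and \(Q' \circ \alpha = \beta_{Y}^{\ast} \circ Q\), i.e.\ that \(P'_{\alpha(\theta)} = \beta_{X}^{\ast}(P_{\theta})\) and \(Q'_{\alpha(\theta)} = \beta_{Y}^{\ast}(Q_{\theta})\) for every \(\theta \in \Theta\). Given a randomization \(K \colon X \to_{k} Y\), set \(K' = \beta_{Y}^{\ast} \circ K \circ (\beta_{X}^{\ast})^{-1}\), that is \(K'_{x'} = \beta_{Y}^{\ast}\bigl(K_{\beta_{X}^{-1}(x')}\bigr)\); this is again a Markov kernel because \(\beta_{X}^{-1}\) is measurable and \(\mu \mapsto \beta_{Y}^{\ast}\mu\) is measurable for the canonical \(\sigma\)-algebras on \(\mathcal{P}(Y)\) and \(\mathcal{P}(Y')\) (these being generated by the evaluations \(\mu \mapsto \mu(A)\)). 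Its inverse assignment is \(K' \mapsto (\beta_{Y}^{\ast})^{-1} \circ K' \circ \beta_{X}^{\ast}\), so \(K \mapsto K'\) is a bijection onto the randomizations \(X' \to_{k} Y'\), and the same conjugation formula handles the abstract transition formulation verbatim.

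The remaining step is the distance computation: for each \(\theta \in \Theta\), using the two squares together with the fact that \(\beta_{Y}^{\ast}\) is a total variation isometry,
\begin{equation*}
  \|K'P'_{\alpha(\theta)} - Q'_{\alpha(\theta)}\|
  = \bigl\|\beta_{Y}^{\ast}K(\beta_{X}^{\ast})^{-1}\beta_{X}^{\ast}P_{\theta} - \beta_{Y}^{\ast}Q_{\theta}\bigr\|
  = \bigl\|\beta_{Y}^{\ast}(KP_{\theta} - Q_{\theta})\bigr\|
  = \|KP_{\theta} - Q_{\theta}\|.
\end{equation*}
Since \(\alpha\) is a bijection between \(\Theta\) and \(\Psi\), taking suprema gives \(\sup_{\psi \in \Psi}\|K'P'_{\psi} - Q'_{\psi}\| = \sup_{\theta \in \Theta}\|KP_{\theta} - Q_{\theta}\|\), and taking the infimum over the bijectively corresponding families of randomizations on each side yields \(\delta(\mathcal{E}', \mathcal{F}') = \delta(\mathcal{E}, \mathcal{F})\).

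I do not expect a genuine obstacle: the lemma is, as noted, theoretically trivial, and the only point deserving attention is the measurability of the transported kernel \(K'\), i.e.\ that push-forward along a measurable map acts measurably between the standard measurable structures on spaces of probability measures. It is also worth recording that a single inequality suffices---say \(\delta(\mathcal{E}', \mathcal{F}') \leq \delta(\mathcal{E}, \mathcal{F})\)---because the hypotheses are invariant under replacing \((\alpha, \beta_{X}, \beta_{Y})\) by \((\alpha^{-1}, \beta_{X}^{-1}, \beta_{Y}^{-1})\), which is precisely what upgrades the bound to an equality.
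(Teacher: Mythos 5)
Your proposal is correct and follows essentially the same route as the paper: conjugating randomizations by \(\beta_{X}^{\ast}\) and \(\beta_{Y}^{\ast}\), using the commuting squares and the bijectivity of \(\alpha\) to reindex, and the total variation isometry of \(\beta_{Y}^{\ast}\) to preserve the supremum, then taking infima over the bijectively corresponding classes of kernels/transitions. The only difference is cosmetic---the paper spells out in more detail that conjugation maps \(L\)-space transitions to \(L\)-space transitions, whereas you treat that case as verbatim; your closing symmetry remark is a nice economy but not needed.
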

\begin{proof}
  Using the characterisation of deficiency in terms of
  transitions\cite[Theorem~6.4.5]{Torgersen1991} one finds
  \begin{equation}\label{eq:commutativityproof}
    \begin{split} 
    \delta(\mathcal{E}', \mathcal{F}')
    &=
      \min_{T}\sup_{\psi} \| TP'_{\psi} - Q'_{\psi}\|
    \\&=
    \min_{T}\sup_{\theta} \| TP'_{\alpha(\theta)} - Q'_{\alpha(\theta)}\|
    \\&=
    \min_{T}\sup_{\theta} \| T(\beta_{X}^{\ast}P_{\theta}) - \beta_{Y}^{\ast}Q_{\theta}\|
    \\&=
    \min_{T}\sup_{\theta} \| (\beta_{Y}^{\ast} \circ (\beta_{Y}^{\ast})^{-1} \circ T \circ \beta_{X}^{\ast})(P_{\theta}) - \beta_{Y}^{\ast}Q_{\theta}\|
    \\&=
    \min_{T}\sup_{\theta} \| ((\beta_{Y}^{\ast})^{-1} \circ T \circ \beta_{X}^{\ast})(P_{\theta}) - Q_{\theta}\|
    \\&=
    \min_{S}\sup_{\theta} \| SP_{\theta} - Q_{\theta}\|
    \\&=
    \delta(\mathcal{E}, \mathcal{F}),
  \end{split}
  \end{equation}
  where \(S\) and \(T\) range over the collections of transitions between the
  \(L\)-spaces of \(\mathcal{E}\) and \(\mathcal{F}\) (see
  Torgersen[Sections~4.5~and~5.6]\cite{Torgersen1991}). Readers not familiar
  with the general machinery of \(L\)-spaces can rest assured as, for our
  purposes, we need the statement only for experiments \(\mathcal{E}\) and
  \(\mathcal{F}\) sufficiently regular for the above computation to hold with
  \(T\) and \(S\) taken as Markov kernels (see the remark after Theorem~6.4.1 in
  Torgersen\cite{Torgersen1991}).
  
  The second equality in Equation~\ref{eq:commutativityproof} follows by
  \(\alpha\) being a bijection, the third is commutativity of the diagrams, the
  fourth equality follows from the fact that for any \(\mu\) and \(\nu\) one has
  \(\|\beta_{Y}^{\ast}\mu - \beta_{Y}^{\ast}\nu\| = \|\mu - \nu\|\) because
  \(\beta_{Y}\) is a bimeasurable bijection, and the fifth equality follows
  because we claim that \(T \mapsto (\beta_{Y}^{\ast})^{-1} \circ T \circ
  \beta_{X}^{\ast}\) define bijections (of appropriate sets of \(L\)-space
  transitions or Markov-kernels) with inverses \(S \mapsto \beta_{Y}^{\ast}
  \circ S \circ (\beta_{X}^{\ast})^{-1}\). That these maps are inverse follows
  directly from the functoriality of the assignment \(\beta \mapsto
  \beta^{\ast}\). It remains to see that they map \(L\)-space transitions to
  \(L\)-space transitions and/or maps induced by Markov kernels to maps induced
  by Markov kernels.

  For the former we will prove that \(\beta_{X}^{\ast}\) maps the \(L\)-space of
  \(\mathcal{E}\) into the \(L\)-space of \(\mathcal{E}'\). The same argument
  will prove the analogous statement for \((\beta_{Y}^{\ast})^{-1}\). Together
  they imply the statement. Since the \(L\)-spaces are simply spaces of measures, 
  this is well defined even though \(\mathcal{E}\) and \(\mathcal{E}'\) have
  different parameter spaces. For any family \((a_{\theta})_{\theta \in
    \Theta}\) such that \(a_{\theta} \neq 0\) only on a countable sequence
  \(\theta_{1}, \theta_{2}, \dotsc\) we need to show that if some \(\mu \ll
  \sum_{\theta} a_{\theta}P_{\theta}\) then \(f^{\ast}_{X}(\mu) \ll
  \sum_{\psi}b_{\psi}P'_{\psi}\) for some \((b_{\psi})_{\psi \in \Psi}\) that is
  non-zero on at most a countable set. Since \(\beta_{X}^{\ast}\) preserves
  probability measures, it is bounded and therefore continuous. Letting
  \(b_{\psi} = a_{\alpha^{-1}(\psi)}\) we have
  \begin{equation*}
    \beta_{X}^{\ast}(\sum_{\theta}a_{\theta}P_{\theta})
    =
    \sum_{\theta}a_{\theta}\beta_{X}^{\ast}(P_{\theta})
    =
    \sum_{\theta}a_{\theta}P'_{\alpha(\theta)}
    =
    \sum_{\theta}a_{\theta}P'_{\alpha(\theta)}
    =
    \sum_{\psi}b_{\psi}P'_{\psi}.
  \end{equation*}
  But it is immediate from the definition of \(\beta_{X}^{\ast}\) that
  \(\beta_{X}^{\ast}(\mu) \ll
  \beta_{X}^{\ast}(\sum_{\theta}a_{\theta}P_{\theta})\), so the result follows.

  In case one wishes to restricts to the case of Markov kernels we note that if
  \(T(P)(A) = \int K_{x}(A) \, P(dx)\) for some Markov kernel \(K\) then
  \begin{equation*}
    \begin{split}
      ((\beta_{Y}^{\ast})^{-1} \circ T \circ \beta_{X}^{\ast})(P)(A)
      &=
      (T(\beta_{X}^{\ast}(P)))(\beta_{Y}(A))
      \\&=
      \int K_{x}(\beta_{Y}(A))\,\beta_{X}^{\ast}(P)(dx)
      \\&=
      \int K_{\beta_{X}^{-1}(x)}(\beta_{Y}(A))\,P(dx).
    \end{split}
  \end{equation*}
  Finally \((x, A) \mapsto K_{\beta_{X}^{-1}(x)}(\beta_{Y}(A))\) is measurable
  in \(x\) for each fixed \(A\) because \(\beta_{X}^{-1}\) is (bi)measurable and
  a Probability measure for each fixed \(x\) because \(\beta_{Y}\) is a
  (bi)measurable bijection. \qedhere{}
\end{proof}

\begin{proof}[Proof~of~Lemma~\ref{lemma:hypercube}]

  By Lemma~\ref{lemma:csquare} it suffices to produce for each \(n =
  1,2,\dotsc\) a bimeasurable bijection \(\beta = \beta_{n} \colon X^{\otimes n}
  \to (X_{1} \oplus \dotsb \oplus X_{k})^{\otimes n}\) such that
  \(\beta_{n}^{\ast}(P_{\theta}^{\otimes n}) = P_{\vartheta(\theta)}^{\otimes
    n}\) where \(P_{\vartheta(\theta)} =
  \sum_{i=1}^{k}P_{\theta}(X_{i})P_{i,\theta}\). It is sufficient to do this for
  \(n = 1\), the general case follows by applying the transformation to each
  component.

  Define \(\beta \colon X \to X_{1} \oplus \dotsb \oplus X_{k}\) by, for each
  \(i = 1, \dotsc, k\), \(\beta(x) = \iota_{i}(x)\) when \(x \in X_{i}\) where
  \(\iota_{i}\) is the natural injection \(X_{i} \to X_{1} \oplus \dotsb \oplus
  X_{k}\). Measurability of \(\beta\) follows from \(\iota_{1}, \dotsc
  \iota_{k}\) being measurable and \(X_{1}, \dotsc, X_{k}\) forming a measurable
  partition of \(X\). Since \(\iota_{1}, \dotsc, \iota_{k}\) are injective and
  \(\iota_{1}(X_{1}), \dotsc, \iota_{k}(X_{k})\) are disjoint it follows that
  \(\beta\) is injective. Since \(\bigcup_{i=1}^{k}\iota_{n}(X_{n}) =
  \bigoplus_{i=1}^{k}X_{n}\) it follows that \(\beta\) is surjective, and hence
  bijective.

  For the inverse \(\beta^{-1}\) to be measurable we need that \(\beta(A)\) be
  measurable for each measurable \(A\). By definition \(A = \bigcup_{i=1}^{k}(A
  \cap \iota_{n}(X_{n}))\) is measurable if and only if \(A \cap
  \iota_{1}(X_{1}) = \iota_{1}(A_{1}), \dotsc, A \cap \iota_{k}(X_{k}) =
  \iota_{k}(A_{k})\) for some measurable \(A_{1}, \dotsc, A_{k}\). But then
  \(\beta(A) = \bigcup_{i=1}^{k}\beta(\iota_{n}(A_{n})) =
  \bigcup_{i=1}^{k}A_{n}\) which is measurable since \(A_{1}, \dotsc, A_{k}\)
  are all measurable.
  
  It remains to prove that \(\beta^{\ast}(P_{\theta}) = P_{\vartheta(\theta)}\).
  To be equal it is sufficient that they agree on \(\iota_{i}(A)\) for
  \(i=1,\dotsc,k\) and \(A\) being \(X_{n}\)-measurable, since these sets
  generate the measurable sets of \(X_{1} \oplus \dotsc \oplus X_{k}\). But
  \begin{equation*}
    \begin{split}
      \beta^{\ast}(P_{\theta})(\iota_{i}(A))
      &=
      P_{\theta}(\beta^{-1}(\iota_{i}(A)))
      =
      P_{\theta}(A)
      \\&=
      P(X_{i})P_{i,\theta}(A)
      =
      P_{\vartheta(\theta)}(\iota_{n}(A)). \qedhere
    \end{split}
  \end{equation*}
\end{proof}

\begin{proof}[Proof of Lemma~\ref{lemma:mixedpower}]
  Define \(I \colon \bigoplus_{i=1}^{m}X_{i} \to \mathcal{D}(\{ 1, \dotsc, m
  \})\) by \(I(\iota_{i}(x)) = i\) for \(i = 1, \dotsc, m\), any \(x \in
  X_{i}\), and where \(\iota_{i} \colon X_{i} \to \bigoplus_{i=1}^{m}X_{i}\) is
  the natural injection. Since, by definition, \(\iota_{1}(X_{1}), \dotsc,
  \iota_{m}(X_{m})\) is a measurable partition of \(\bigoplus_{i=1}^{m}X_{i}\)
  we have that \(I\) is measurable. For any natural number \(k\) let \(S_{k}\)
  denote the group of permutations on \(\{1, \dotsc, k\}\). Given a vector of
  integers \(\kappa = (\kappa_{1}, \dotsc, \kappa_{k})\) there exists a unique
  stable sorting permutation \(\sigma_{\kappa} \in S_{k}\) defined by
  \(\kappa_{\sigma_{\kappa}(1)} \leq \kappa_{\sigma_{\kappa}(2)} \leq \dotsb
  \leq \kappa_{\sigma_{\kappa}(k)}\) and if \(i \leq j\) and \(\kappa_{i} =
  \kappa_{j}\) then \(\sigma_{\kappa}(i) \leq \sigma_{\kappa}(j)\). Note that
  all maps \(\mathcal{D}(\{1, \dotsc, k\})^{\otimes n} \to \mathcal{D}(S_{m})\)
  are measurable. Combining the above gives that the map \((x_{1}, \dotsc,
  x_{m}) \mapsto \sigma_{(I(x_{1}), \dotsc, I(x_{m}))}\) is measurable. Moreover,
  define \(c \colon \mathcal{D}(\Z)^{\otimes n} \to \mathcal{D}(\Z)^{\otimes
    m}\) by \(c(l) = (c_{1}(l), \dotsc, c_{m}(l))\) where \(c_{i}(l_{1}, \dotsc,
  l_{n}) = |\{ j \mid l_{j} = i\}|\). Combining the above we may define the
  measurable map \(\eta \colon (\bigoplus_{i=1}^{m}X_{i})^{\otimes n} \to
  \bigoplus_{n_{1} + \dotsb + n_{m} = n} X_{1}^{\otimes n_{1}} \otimes \dotsb
  \otimes X_{m}^{\otimes n_{m}}\) by
  \begin{equation*}
    \begin{split}
      &\eta(\iota_{i_{1}}(x_{1}), \dotsc, \iota_{i_{n}}(x_{n})) =
      \\&\quad= \iota'_{c(I(x_{1}), \dotsc, I(x_{n}))}(x_{\sigma_{I(x_{1}),
          \dotsc, I(x_{n})}(1)}, \dotsc, x_{\sigma_{I(x_{1}),
          \dotsc, I(x_{n})}(n)}
      )
    \end{split}
  \end{equation*}
  where for any partition \(n_{1}, \dotsc, n_{m}\) of \(n\) we let
  \(\iota'_{n_{1}, \dotsc, n_{m}}\) denote the injection \(X_{1}^{\otimes n_{1}}
  \otimes \dotsb \otimes X_{m}^{\otimes n_{m}} \to \bigoplus_{n_{1} + \dotsb +
    n_{m} = n} X_{1}^{\otimes n_{1}} \otimes \dotsb \otimes X_{m}^{\otimes
    n_{m}}\). The map \(\eta\) simply sorts a vector in
  \((\bigoplus_{i=1}^{m}X_{i})^{\otimes n}\) according to which subspace
  \(\iota_{1}(X_{1}), \dotsc, \iota_{m}(X_{n})\) each component lies in.
  
  Fix some \(\theta \in \Theta\), let \(A = (A_{1}, \dotsc, A_{n})\) be a vector
  of jointly independent random variables such that \(\Prob(A_{i} = j) =
  p_{j}\), \(\xi_{1,1}, \dotsc, \xi_{1,n}\) be jointly independent
  \(P_{1,\theta}\)-distributed, \ldots, and \(\xi_{m,1}, \dotsc, \xi_{m,n}\) be
  jointly independent \(P_{m, \theta}\)-distributed, all jointly independent of
  each other. By construction \(\xi = (\xi_{1}, \dotsc, \xi_{n}) =
  (\iota_{A_{1}}(\xi_{A_{1},1}), \dotsc, \iota_{A_{n}}(\xi_{A_{n}, n}))\) is
  \((\sum_{i=1}^{m}p_{i}P_{i,\theta})^{\otimes n}\)-distributed so that
  \(\eta(\xi)\) has law \(\eta^{\ast}((\sum_{i=1}^{m}p_{i}P_{i,\theta})^{\otimes
    n})\). Since \(I(\xi_{i}) = A_{i}\) it immediately follows that
  \begin{equation*}
    (c_{1}(I(\xi_{1}), \dotsc, I(\xi_{n})), \dotsc, c_{m}(I(\xi_{1}), \dotsc,
    I(\xi_{n}))) = (c_{1}(A), \dotsc, c_{m}(A))
  \end{equation*}
  is distributed according to \(\Mult(n, p_{1}, \dotsc, p_{m})\) so that \(\sigma_{I(\xi_{1}),\dotsc,I(\xi_{n})} =
  \sigma_{A}\)

  For some fixed vector \(a = (a_{1}, \dotsc, a_{n}) \in \{1,\dotsc,m\}^{n}\)
  consider the conditional distribution \(\xi\) on the event \(A = a\). Since
  \(A_{1}, \dotsc, A_{n}\) are mutually independent and independent of
  \(\xi_{1,1}, \xi_{1,2}, \dotsc, \xi_{m,n-1}, \xi_{m,n}\) we have that
  \(\iota_{A_{1}}(\xi_{A_{1},1}), \dotsc, \iota_{A_{n}}(\xi_{A_{n}, n})\) are
  still mutually independent conditional on \(A=a\) with the (conditional)
  marginal law of \(\xi_{A_{i},i}\) being \(P_{a_{i},\theta}\). Since
  \(\eta(\xi) = \iota'_{c(A)}(\xi_{\sigma_{A}(1)},\dotsc,\xi_{\sigma_{A}(n)})\)
  we have that the conditional law of \(\eta(\xi)\) on \(A = a\) is
  \((\iota'_{c(a)})^{\ast}(P_{a_{\sigma_{a}}(1),\theta} \otimes \dotsb \otimes
  P_{a_{\sigma_{a}}(n),\theta}) = (\iota'_{c(a)})^{\ast}(P_{1,\theta}^{\otimes
    c_{1}(a)} \otimes \dotsb \otimes P_{m,\theta}^{\otimes c_{m}(a)})\). By the
  law of total probability
  \begin{equation*}
    \eta^{\ast}((\sum_{i=1}^{m}p_{i}P_{i,\theta})^{\otimes n}) =
    \sum_{n_{1}, \dotsc, n_{m}} q(n_{1}, \dotsc, n_{m}) \, (\iota'_{n_{1},\dotsc,n_{m}})^{\ast}(P_{1,\theta}^{\otimes n_{1}}
    \otimes \dotsb \otimes P_{m,\theta}^{\otimes n_{m}})
  \end{equation*}
  recalling that \(q(n_{1}, \dotsc, n_{m}) = \Prob(\Mult(n, p_{1}, \dotsc,
  p_{m}) = (n_{1},\dotsc,n_{k}))\).

  It follows that the left hand \((\sum_{i=1}^{m}p_{i}\mathcal{E}_{i} )^{\otimes
    n}\) is at least as informative as the right-hand \(\sum_{n_{1}, \dotsc,
    n_{m}} q(n_{1}, \dotsc, n_{m}) \, \mathcal{E}_{1}^{\otimes n_{1}} \otimes
  \dotsb \otimes \mathcal{E}_{m}^{\otimes n_{m}}\).

  For the converse, let
  \begin{equation*}
    \eta' \colon \bigoplus_{n_{1} + \dotsb + n_{m} = n}
    X_{1}^{\otimes n_{1}} \otimes \dotsb \otimes X_{m}^{\otimes n_{m}} \to
    \Bigl(\bigoplus_{i=1}^{m}X_{i}\Bigr)^{\otimes n}
  \end{equation*}
  be defined by
  \begin{equation*}
    \begin{split}
      &\eta'(\iota_{n_{1},\dotsc,n_{m}}(x_{1}, \dotsc, x_{n})) =
      \\&\quad=
      (\iota_{1}(x_{1}), \dotsc,
      \iota_{1}(x_{n_{1}}), \dotsc, \iota_{m}(x_{n-n_{m}}), \dotsc,
      \iota_{m}(x_{n})).
    \end{split}
  \end{equation*}
  \(\eta'\) is measurable on each part \(\iota_{n_{1},\dotsc,n_{m}}(X^{\otimes
    n_{1}} \otimes \dotsb \otimes X^{\otimes n_{m}})\) since \(\iota_{1},
  \dotsc, \iota_{k}\) are measurable, and hence measurable on all of
  \(\bigoplus_{n_{1} + \dotsb + n_{m} = n} X_{1}^{\otimes n_{1}} \otimes \dotsb
  \otimes X_{m}^{\otimes n_{m}}\!\).

  It remains now only to note that \(\eta'(\eta(\xi))\) is equal to
  \begin{equation*}
    (\iota_{1}(\xi_{\sigma_{A}(1)}), \dotsc, \iota_{1}(\xi_{\sigma_{A}(m_{1}(A))}), \dotsc, \iota_{m}(\xi_{\sigma_{A}(n-m_{k}(A))}), \dotsc, \iota_{k}(\xi_{\sigma_{A}(n)}))
  \end{equation*}
  which is a permutation of \(\xi\). In particular, \(\eta'(\eta(\xi))\) and
  \(\xi\) have the same empirical measure. Since the empirical measure is a
  sufficient statistic for \((\sum_{i=1}^{m}p_{i}\mathcal{E}_{i} )^{\otimes n}\)
  the result follows. \qedhere{}
\end{proof}

In order to prove Lemma~\ref{lemma:multitestbound} we will need some basic
results concerning the risks of a certain type of multiple decision problems.

\begin{lemma}\label{lemma:multiprob}

  Let \(\mathcal{E}_{1} = (X_{1}, P_{1,\theta} ; \theta \in \Theta_{1}), \dotsc,
  \mathcal{E}_{m} = (X_{m}, P_{m,\theta} ; \theta \in \Theta_{m})\) be a
  sequence of experiments on finite parameter spaces \(\Theta_{1}, \dotsc,
  \Theta_{m}\). Also let \((A_{1}, L_{1}), \dotsc, (A_{m}, L_{m})\) be a
  sequence of finite 0-1-decision problems on \(\mathcal{E}_{1}, \dotsc,
  \mathcal{E}_{m}\), respectively.

  Fix some \(l \in \N\), define the experiment \(\mathcal{E} = (X_{1} \otimes
  \dotsb \otimes X_{m}, P_{1,\theta_{1}} \otimes \dotsb \otimes P_{m,\theta_{m}}
  ; (\theta_{1}, \dotsc, \theta_{m}) \in \Theta_{1} \times \dotsb \times
  \Theta_{m})\), and define a 0-1-decision problem \((A, L)\) by \(A = A_{1}
  \times \dotsb \times A_{m}\) and
  \begin{equation*}
    L(a, \theta) =
    \begin{cases}
      0 & L_{1}(a_{1}, \theta_{1}) + \dotsb + L_{m}(a_{m},
      \theta_{m}) < l
      \\
      1 & L_{1}(a_{1}, \theta_{1}) +
      \dotsb + L_{m}(a_{m}, \theta_{m}) \geq l
    \end{cases}.
  \end{equation*}

  Given decision procedures \(\varphi_{1}, \dotsc, \varphi_{m}\) for \((A_{1},
  L_{1}), \dotsc, (A_{m}, L_{m})\) on observing \(\mathcal{E}_{1}, \dotsc,
  \mathcal{E}_{m}\), define the decision procedure \(\varphi = \varphi_{1}
  \otimes \dotsb \otimes \varphi_{m}\) for \((A, L)\) on observing
  \(\mathcal{E}\).
  
  If \(\varphi_{1}, \dotsc, \varphi_{m}\) have risks \(r_{1}, \dotsc, r_{m}\) at
  some \(\theta_{1} \in \Theta, \dotsc, \theta_{m} \in \Theta_{m}\) on observing
  \(\mathcal{E}_{1}, \dotsc, \mathcal{E}_{m}\) then \(\varphi\) has risk
  \(\Prob(\PBin(r_{1}, \dotsc, r_{m}) \geq l)\) at \((\theta_{1}, \dotsc,
  \theta_{m}) \in \Theta_{1} \times \dotsb \times \Theta_{m}\) on
  observing \(\mathcal{E}\).

  If \(\varphi_{1}, \dotsc, \varphi_{m}\) are Bayes with respect to some priors
  \(\pi_{1}, \dotsc, \pi_{m}\) on \(\Theta_{1}, \dotsc, \Theta_{m}\) on
  observing \(\mathcal{E}_{1}, \dotsc, \mathcal{E}_{m}\) then \(\varphi\) is
  Bayes with respect to the prior \(\pi = \pi_{1} \otimes \dotsb \otimes
  \pi_{m}\) on \(\Theta_{1} \times \dotsb \times \Theta_{m}\) on observing
  \(\mathcal{E}\).
\end{lemma}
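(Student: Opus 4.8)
The plan is to establish the two assertions separately. The risk identity follows at once from independence: fix $(\theta_1,\dots,\theta_m)$ and consider drawing $x=(x_1,\dots,x_m)$ from $P_{1,\theta_1}\otimes\dots\otimes P_{m,\theta_m}$ and then $a=(a_1,\dots,a_m)$ from $\varphi_x=(\varphi_1)_{x_1}\otimes\dots\otimes(\varphi_m)_{x_m}$. The coordinates $x_i$ are independent and each $a_i$ is produced from $x_i$ alone, so the pairs $(x_i,a_i)$, and hence the indicators $L_i(a_i,\theta_i)$, are mutually independent. Since $(A_i,L_i)$ is $0$-$1$, each $L_i(a_i,\theta_i)$ is Bernoulli with mean $\Expect(L_i(a_i,\theta_i))=R_{\mathcal{E}_i}(\varphi_i,\theta_i)=r_i$, so $L_1(a_1,\theta_1)+\dots+L_m(a_m,\theta_m)$ has law $\PBin(r_1,\dots,r_m)$; by the definition of $(A,L)$ the risk of $\varphi$ at $(\theta_1,\dots,\theta_m)$ is then $\Expect(L(a,\theta))=\Prob(\sum_i L_i(a_i,\theta_i)\geq l)=\Prob(\PBin(r_1,\dots,r_m)\geq l)$.

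For the Bayes statement I would pass to the Bayesian model for $\mathcal{E}$ with prior $\pi=\pi_1\otimes\dots\otimes\pi_m$. Because $\Theta_1,\dots,\Theta_m$ are finite, the marginal law of the observation is the product $m=m_1\otimes\dots\otimes m_m$ with $m_i=\sum_{\theta_i}\pi_i(\theta_i)P_{i,\theta_i}$, and the posterior factorises, $\bar\pi_x=\bar\pi_{1,x_1}\otimes\dots\otimes\bar\pi_{m,x_m}$ for $m$-almost every $x$ (one checks this from the factorisation of the likelihood of $\mathcal{E}$ against the dominating measure $m$). Writing $\ell_i(a_i,x_i)=\int L_i(a_i,\theta_i)\,\bar\pi_{i,x_i}(d\theta_i)$ for the posterior expected loss of the $i$-th problem and $g_x(a)=\int L(a,\theta)\,\bar\pi_x(d\theta)$ for that of the product problem, the factorisation makes $L_1(a_1,\cdot),\dots,L_m(a_m,\cdot)$ independent Bernoulli variables with parameters $\ell_1(a_1,x_1),\dots,\ell_m(a_m,x_m)$ under $\bar\pi_x$, so $g_x(a)=\Prob(\PBin(\ell_1(a_1,x_1),\dots,\ell_m(a_m,x_m))\geq l)$. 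By Fubini and the definition of the posterior, the Bayes risk of an arbitrary decision procedure $\rho$ for $(A,L)$ on observing $\mathcal{E}$ equals $\int\bigl(\int g_x(a)\,\rho_x(da)\bigr)\,m(dx)\geq\int\min_a g_x(a)\,m(dx)$, with equality precisely when $\rho_x$ is concentrated on the set of minimisers of $g_x$ for $m$-almost every $x$.

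It then remains to identify those minimisers and verify that $\varphi$ realises them. By Lemma~\ref{lemma:pbindiff} (and directly when $l\geq m$) the tail $\Prob(\PBin(p_1,\dots,p_m)\geq l)$ is nondecreasing in each $p_i$, so $g_x(a)\geq\Prob(\PBin(\ell_1^{*}(x_1),\dots,\ell_m^{*}(x_m))\geq l)$ for every $a$, where $\ell_i^{*}(x_i)=\min_{a_i}\ell_i(a_i,x_i)$, with equality whenever every coordinate $a_i$ minimises $\ell_i(\cdot,x_i)$. Since $\varphi_i$ is Bayes for $(A_i,L_i)$ with respect to $\pi_i$, its kernel $(\varphi_i)_{x_i}$ is concentrated on $\{a_i:\ell_i(a_i,x_i)=\ell_i^{*}(x_i)\}$ for $m_i$-almost every $x_i$, being the usual consequence of the non-negative quantity $\int\bigl(\ell_i(a_i,x_i)-\ell_i^{*}(x_i)\bigr)\,(\varphi_i)_{x_i}(da_i)$ vanishing $m_i$-almost everywhere. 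As $m=m_1\otimes\dots\otimes m_m$, for $m$-almost every $x$ all coordinates simultaneously avoid the exceptional null sets, so $\varphi_x$ is concentrated on the product of the coordinatewise minimiser sets, which is contained in the set of minimisers of $g_x$; hence $\int g_x(a)\,\varphi_x(da)=\min_a g_x(a)$ for $m$-almost every $x$ and $\varphi$ attains the lower bound, i.e.\ $\varphi$ is Bayes with respect to $\pi$.

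The one genuinely substantive step is the coupling, in the Bayes part, of the posterior factorisation with the monotonicity of the Poisson-binomial tail in its parameters: this is exactly what decouples the single minimisation over $A=A_1\times\dots\times A_m$ into the $m$ coordinatewise minimisations already solved by $\varphi_1,\dots,\varphi_m$, and it is where Lemma~\ref{lemma:pbindiff} is used. Everything else — independence of the indicator losses under the product, the identity $g_x(a)=\Prob(\PBin(\cdots)\geq l)$, the factorisation of the observation marginal and of the posterior, and the concentration of a randomised Bayes rule on minimising actions — is routine, fiddly only in tracking null sets under the product measure and in accommodating randomisation.
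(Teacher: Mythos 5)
Your proposal is correct and follows essentially the same route as the paper: independence of the coordinate losses gives the Poisson--binomial risk formula, and the Bayes claim is obtained by factorising the posterior over the product prior and using the coordinatewise monotonicity of \(p\mapsto\Prob(\PBin(p_{1},\dotsc,p_{m})\geq l)\) to reduce minimisation of the posterior risk over \(A_{1}\times\dotsb\times A_{m}\) to the \(m\) separate minimisations solved by \(\varphi_{1},\dotsc,\varphi_{m}\). The only cosmetic difference is that you derive the monotonicity from Lemma~\ref{lemma:pbindiff} where the paper cites the stochastic dominance ordering of Poisson--binomial laws in their parameters; these are interchangeable.
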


\begin{proof}
  Assume \(\varphi_{1}, \dotsc, \varphi_{m}\) have risk \(r_{1}, \dotsc, r_{m}\)
  at some \(\theta_{1} \in \Theta_{1}, \dotsc, \theta_{m} \in \Theta_{m}\). For
  \(i = 1, \dotsc, m\) denote \(B_{i} = \{ a \in A_{i} \mid L_{i}(a, \theta_{i})
  = 1\}\). Let \(\xi_{1}, \dotsc, \xi_{m}\) be independent random variables
  distributed according to \(P_{1,\theta_{1}}, \dotsc, P_{m,\theta_{m}}\),
  respectively. By definition \(r_{i} = \Expect(L(\varphi_{i}(\xi_{i}),
  \theta_{i})) = \Prob(\varphi_{i}(\xi_{i}) \in B_{i})\) for \(i=1, \dotsc, m\).

  Since \(L(\varphi_{1}(\xi_{1}), \theta_{1}), \dotsc, L(\varphi_{m}(\xi_{m}),
  \theta_{m})\) are independent Bernoulli we have
  \begin{equation*}
    \begin{split}
      &\Expect(L(\varphi_{1}(\xi_{1}), \dotsc, \varphi_{m}(\xi_{m});
      \theta_{1}, \dotsc, \theta_{m}))
      =\\
      &\quad=
      \Prob(L(\varphi_{1}(\xi_{1}), \theta_{1}) +
      \dotsb + L_{m}(\varphi_{m}(\xi_{m}), \theta_{m}) \geq l)
      \\&\quad=
      \Prob(\PBin(r_{1}, \dotsc, r_{m}) \geq l).
    \end{split}
  \end{equation*}

  Assume, now, instead that \(\varphi_{1}, \dotsc, \varphi_{m}\) are Bayes with
  respect to priors \(\pi_{1}, \dotsc, \pi_{m}\) on \(\Theta_{1}, \dotsc,
  \Theta_{m}\), respectively. Let \((\theta_{1}, \xi_{1}), \dotsc, (\theta_{m},
  \xi_{m})\) be mutually independent pairs with \(\theta = (\theta_{1}, \dotsc,
  \theta_{m})\) distributed according to \(\pi = \pi_{1} \otimes \dotsb \otimes
  \pi_{m}\) and \((\xi_{1}, \dotsc, \xi_{m})\) distributed according to
  \(P_{\theta} = P_{1, \theta_{1}} \otimes \dotsb \otimes P_{m,\theta_{m}}\)
  conditional on \(\theta_{1}, \dotsc, \theta_{m}\).

  Denote by \(P_{\pi_{i}} = \sum_{t_{i}} \pi_{i}(t_{i}) P_{i,t_{i}}\) the
  marginal distribution of \(\xi_{i}\) and \(P_{\pi} = \sum_{t} \pi(t)P_{t} =
  P_{1,\pi_{1}} \otimes \dotsb \otimes P_{m,\pi_{m}}\) the marginal distribution
  of \((\xi_{1}, \dotsc, \xi_{n})\).

  Since the parameter spaces are finite each family \((P_{i,\theta_{i}})_{\theta
    \in \Theta}\) is dominated and we may define posterior distributions
  \(\pi^{x}\) for \(x = (x_{1}, \dotsc, x_{m})\) such that for any integrable
  \(f\) one has\cite[Proposition~3.32]{Liese2008}
  \begin{equation*}
    \int f(x, t) \,P_{t}(dx) \,\pi(dt) = \int f(x, t)
    \,\pi^{x}(dt) \,P_{\pi}(dx).
  \end{equation*}
  Moreover, by independence it may be chosen such that \(\pi^{x} =
  \pi_{1}^{x_{1}} \dotsb \pi_{m}^{x_{m}}\) where for integrable \(f_{i}\) one
  has
  \begin{equation*}
    \int f(x_{i}, t_{i}) \,P_{i,t_{i}}(dx_{i}) \,\pi_{i}(dt_{i}) = \int
    f_{i}(x_{i}, t_{i}) \,\pi_{i}^{x_{i}}(dt_{i}) \,P_{i,\pi_{i}}(dx_{i}).
  \end{equation*}
  Bayes procedures \(\delta\) for \(\pi\) are given by \(\delta(x)\) being
  minimisers of the posterior risks \(a \mapsto \pi^{x}(t \mapsto L(a, x))\) for
  \(P_{\pi}\)-almost every \(x\)\cite[Proposition~3.37]{Liese2008}.
  
  The above factorisation implies that the distribution of \(L_{1}(a_{1},
  \theta_{1}) + \dotsb + L_{m}(a_{m}, \theta_{m})\) under \(\pi^{x}\) is given
  by \(\PBin(\rho^{x_{1}}_{1}(a_{1}), \dotsc, \rho^{x_{m}}_{m}(a_{m}))\) where
  \(\rho^{x_{i}}_{i}(a_{i}) = \pi_{i}^{x_{i}}(\Theta_{a_{i},i})\) and
  \(\Theta_{a_{i},i} = \{ t_{i} \in \Theta_{i} \mid L_{i}(a_{i}, t_{i}) = 1\}\).
  
  Since \(\PBin(p_{1},\dotsc,p_{m})\) is monotone increasing in (the usual)
  stochastic (dominance) order (see for example \cite[Chapter~1]{Shaked2007})
  with respect to \(p_{1}, \dotsc, p_{m}\) it follows that minimising
  \(\Prob(\PBin(p_{1},\dotsc,p_{m}) \geq l)\) can be done by minimising \(p_{1},
  \dotsc, p_{m}\) separately. But by assumption \(\varphi_{1}, \dotsc,
  \varphi_{m}\) are Bayes for \((A_{1}, L_{1}), \dotsc, (A_{m}, L_{m})\) with
  respect to \(\pi_{1}, \dotsc, \pi_{m}\), respectively. This means exactly that
  they minimise \(\rho^{x_{1}}_{1}(a_{1}), \dotsc, \rho^{x_{m}}_{m}(a_{m})\),
  respectively, for \((P_{\pi_{1}} \otimes \dotsb \otimes P_{\pi_{m}})\)-almost
  all \((x_{1}, \dotsc, x_{m})\). The result follows. \qedhere{}
\end{proof}

We need a similar result for mixture experiments. This is essentially just the
statement that Bayes decisions satisfy the so called conditionality principle,
specialised to our situation.

\begin{lemma}\label{lemma:mixoptimal}

  Let \(\mathcal{E}_{1} = (X_{1}, P_{1,\theta} ; \theta \in \Theta), \dotsc,
  \mathcal{E}_{m} = (X_{m}, P_{m,\theta} ; \theta \in \Theta)\) be a collection
  of experiments on some finite parameter space \(\Theta\). Let \((A, L)\) be
  some finite decision problem with corresponding decision procedures
  \(\varphi_{1}, \dotsc, \varphi_{m}\) on observing \(\mathcal{E}_{1}, \dotsc,
  \mathcal{E}_{m}\), respectively. For any convex coefficients \((p_{1}, \dotsc,
  p_{m}) \in \Delta_{m}\) define \(\varphi = \varphi_{1} \oplus \dotsb \oplus
  \varphi_{m}\) as a decision procedure for \((A, L)\) on observing the mixture
  experiment \(\sum_{i=1}^{m} p_{i} \mathcal{E}_{i}\).

  If \(\varphi_{1}, \dotsc, \varphi_{m}\) have risks \(r_{1}, \dotsc, r_{m}\) at
  \(\theta \in \Theta\) then \(\varphi\) has risk \(\sum_{i} p_{i} r_{i}\) at
  \(\theta\).

  If \(\varphi_{1}, \dotsc, \varphi_{m}\) are Bayes with respect to the same
  prior \(\pi\) then \(\varphi\) is Bayes with respect \(\pi\).
\end{lemma}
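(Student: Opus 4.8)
The plan is to reduce both assertions to the single structural fact that a decision procedure on the mixture experiment $\sum_{i=1}^{m}p_{i}\mathcal{E}_{i}$ is exactly an $m$-tuple of decision procedures on $\mathcal{E}_{1}, \dotsc, \mathcal{E}_{m}$ glued along the coproduct $\bigoplus_{i=1}^{m}X_{i}$, together with the fact that integration against the direct sum measure $\bigoplus_{i=1}^{m}p_{i}P_{i,\theta}$ splits as the $p_{i}$-weighted sum of the integrations against the individual $P_{i,\theta}$.

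First I would record the risk formula. Write $\iota_{i} \colon X_{i} \to \bigoplus_{j=1}^{m}X_{j}$ for the natural injections. The measure $\bigoplus_{i=1}^{m}p_{i}P_{i,\theta}$ assigns to $\bigcup_{i}\iota_{i}(A_{i})$ the value $\sum_{i}p_{i}P_{i,\theta}(A_{i})$, so for any integrable $h$ on $\bigoplus_{i}X_{i}$ one has $\int h \, d(\bigoplus_{i}p_{i}P_{i,\theta}) = \sum_{i}p_{i}\int h(\iota_{i}(x)) \, P_{i,\theta}(dx)$. By definition of $\varphi = \varphi_{1} \oplus \dotsb \oplus \varphi_{m}$ one has $\varphi_{\iota_{i}(x)} = (\varphi_{i})_{x}$, so applying the above to $h(y) = \int L(a, \theta) \, \varphi_{y}(da)$ gives
\[
  R_{\sum_{i} p_{i}\mathcal{E}_{i}}(\varphi, \theta)
  = \sum_{i=1}^{m} p_{i}\int L(a, \theta) \, (\varphi_{i})_{x}(da) \, P_{i,\theta}(dx)
  = \sum_{i=1}^{m} p_{i} R_{\mathcal{E}_{i}}(\varphi_{i}, \theta)
  = \sum_{i=1}^{m} p_{i} r_{i},
\]
which is the first assertion. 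Averaging over $\theta$ with respect to $\pi$ and interchanging the two finite sums, the Bayes risk of $\varphi$ with respect to $\pi$ on observing $\sum_{i}p_{i}\mathcal{E}_{i}$ equals $\sum_{i=1}^{m}p_{i}\bar r_{i}$, where $\bar r_{i}$ is the Bayes risk of $\varphi_{i}$ with respect to $\pi$ on observing $\mathcal{E}_{i}$.

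For the second assertion I would note that the decomposition above is available for \emph{every} decision procedure on $\sum_{i}p_{i}\mathcal{E}_{i}$, not only $\varphi$: any Markov kernel $\psi \colon \bigoplus_{i}X_{i} \to_{k} A$ restricts along each $\iota_{i}$ to a Markov kernel $\psi_{i} \colon X_{i} \to_{k} A$ with $(\psi_{i})_{x} = \psi_{\iota_{i}(x)}$, and $\psi = \psi_{1} \oplus \dotsb \oplus \psi_{m}$; measurability of each $\psi_{i}$ is immediate since $\iota_{i}$ is measurable, and $(\psi_{i})_{x}$ is a probability measure on the fixed action space $A$ for each $x$. Hence, by the same computation, the Bayes risk of $\psi$ with respect to $\pi$ is $\sum_{i=1}^{m}p_{i}\bar r_{i}'$ with $\bar r_{i}'$ the Bayes risk of $\psi_{i}$. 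Since each $\varphi_{i}$ is Bayes with respect to $\pi$ on $\mathcal{E}_{i}$ we have $\bar r_{i} \leq \bar r_{i}'$ for every $i$, and as $p_{i} \geq 0$ the Bayes risk of $\varphi$ is no larger than that of $\psi$. As $\psi$ was arbitrary, $\varphi$ is Bayes with respect to $\pi$ on observing $\sum_{i}p_{i}\mathcal{E}_{i}$.

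There is no real obstacle here; the whole content is the measure-theoretic bookkeeping that (i) integration against $\bigoplus_{i}p_{i}P_{i,\theta}$ splits as stated and (ii) decision procedures on the coproduct $\bigoplus_{i}X_{i}$ correspond bijectively to tuples of decision procedures on the $X_{i}$. The only point needing a line of care is checking that the restricted kernels $\psi_{i}$ really are Markov kernels, which follows from measurability of $\iota_{i}$ and finiteness of $A$.
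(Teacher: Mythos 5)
Your proof is correct, and for the Bayes-optimality part it takes a genuinely different route from the paper. The paper argues via regular posterior distributions: it notes that the posterior at a point \(\iota_{i}(x)\) of the mixture's sample space coincides with the posterior \(\pi_{i}^{x}\) on observing \(x\) from \(\mathcal{E}_{i}\), and then invokes the characterisation of Bayes procedures as almost-everywhere minimisers of the posterior risk, so that \(\varphi\) is Bayes because each \(\varphi_{i}\) minimises its own posterior risk pointwise. This mirrors the structure of the proof of Lemma~\ref{lemma:multiprob}, where the posterior machinery is genuinely needed because the aggregated 0--1 loss there is a nonlinear (thresholded) function of the component losses. You instead exploit the fact that in the mixture case the risk is \emph{linear} in the components: you show that \emph{every} competing procedure \(\psi\) on \(\bigoplus_{i}X_{i}\) decomposes as \(\psi_{1}\oplus\dotsb\oplus\psi_{m}\), that its average risk under \(\pi\) is \(\sum_{i}p_{i}\bar r_{i}'\), and that each \(\bar r_{i}'\) is at least the minimum Bayes risk on \(\mathcal{E}_{i}\), which \(\varphi_{i}\) attains. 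This is more elementary --- it avoids the existence of regular posteriors (the paper's appeal to Liese--Miescke) entirely and only uses the bijection between kernels on the coproduct and tuples of kernels on the summands --- at the cost of being special to the additive structure of mixtures; the paper's posterior argument is the one that generalises to the product/threshold setting of Lemma~\ref{lemma:multiprob}. Your risk formula for the first assertion is the same computation the paper declares immediate, and your care about measurability of the restricted kernels \(\psi_{i}\) is the right (and only) technical point to check.
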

\begin{proof}
  That \(\varphi_{1}, \dotsc, \varphi_{m}\) have risk \(r_{1}, \dotsc, r_{m}\)
  for some particular \(\theta \in \Theta\) then it is immediate from the
  definition of \(\sum_{i=1}^{m}p_{i}\mathcal{E}_{i}\) that \(\varphi\) has risk
  \(\sum_{i=1}^{m}p_{i}r_{i}\) for the same \(\theta \in \Theta\).

  The fact that \(\varphi\) is Bayes for \(\pi\) whenever \(\varphi_{1}, \dotsc,
  \varphi_{m}\) are Bayes for \(\pi\) follows similarly to the proof of
  Lemma~\ref{lemma:multiprob}. There exists posterior distributions \(\pi^{x}\)
  with the property that \(\pi^{\iota_{i}(x)} = \pi_{i}^{x}\) where
  \(\pi_{i}^{x}\) is a posterior distribution on \(\Theta\) on observing \(x\)
  from \(\mathcal{E}_{i}\). Minimising the posterior risk is therefore
  equivalent to minimising the posterior with respect to \(\pi^{x}\) can
  therefore be done by minimising each of \(\pi_{1}^{x}, \dotsc, \pi_{m}^{x}\)
  separately. But by assumption, as in Lemma~\ref{lemma:multiprob}, this is what
  \(\varphi_{1}, \dotsc, \varphi_{m}\) do. \qedhere{}
\end{proof}

\begin{proof}[Proof~of~Lemma~\ref{lemma:multitestbound}]

  Let experiments \(\mathcal{E}_{1}, \dotsc, \mathcal{E}_{m}, \mathcal{E},
  \mathcal{E}'\), with corresponding parameter pairs \(\theta_{1,0},
  \theta_{1,1}\), \ldots, \(\theta_{m,0}, \theta_{m,1}\), priors \(\pi_{1},
  \dotsc, \pi_{m}\), Bayes risks \(r_{1}(n), \dotsc, r_{m}(n)\), and integer \(l
  \in \{1, \dotsc, m\}\) be as in the statement. For any \(n \in \N\) and \(i =
  1, \dotsc, m\) let \(\varphi_{i,n}\) be Bayes procedures for testing
  \(P_{i,0}^{\otimes n}\) against \(P_{i,1}^{\otimes n}\) under the prior
  \(\pi_{i}\), with the corresponding Bayes risks \(r_{i}(n)\). Let also
  \begin{equation*}
    \begin{split}
      \makebox[\widthof{\(\mathcal{E}'_{m}\)}][c]{\(\mathcal{E}'_{1}\)} &= (\makebox[\widthof{\(X_{m}\)}][c]{\(X_{1}\)},
      \makebox[\widthof{\(P_{m,\theta}\)}][c]{\(P_{1,\theta}\)} ; \makebox[\widthof{\(\theta \in
        \{\theta_{m,0}, \theta_{m,1}\}\)}][c]{\(\theta \in
        \{\theta_{1,0}, \theta_{1,1}\}\)})
      \\
      &\vdotswithin{=}
      \\
      \mathcal{E}'_{m} &= (X_{m}, P_{m,\theta} ; \theta \in \{\theta_{m,0},
      \theta_{m,1}\}).
    \end{split}
  \end{equation*}
  For any \(n_{1}, \dotsc, n_{m}\) apply Lemma~\ref{lemma:multiprob} to
  experiments \((\mathcal{E}'_{1})^{\otimes n_{1}}\), \ldots{},
  \((\mathcal{E}'_{m})^{\otimes n_{m}}\), procedures \(\varphi_{1,n_{1}}\),
  \ldots{}, \(\varphi_{m,n_{m}}\), and the testing problems \((\{0, 1\},
  L_{1})\), \ldots{}, \((\{0, 1\}, L_{m})\) where \(L_{i}(j, \theta_{i,k})\) is
  \(0\) if \(j = k\) and \(1\) otherwise. This gives that \(\varphi_{1,n_{1}}
  \otimes \dotsb \otimes \varphi_{m,n_{m}}\) is a Bayes procedure with respect
  to the prior \(\pi = \pi_{1} \otimes \dotsb \otimes \pi_{m}\) on observing the
  experiment \((X_{1}^{\otimes n_{1}} \otimes \dotsb \otimes X_{m}^{\otimes
    n_{m}}, P_{1,\theta_{1}}^{\otimes n_{1}} \otimes \dotsb \otimes
  P_{1,\theta_{m}}^{\otimes n_{m}}; (\theta_{1}, \dotsc, \theta_{m}) \in
  \{\theta_{1,0}, \theta_{1,1}\} \times \dotsb \times \{\theta_{m,0},
  \theta_{m,1}\})\) and that it has Bayes risk \(\Prob(\PBin(r_{1}(n_{1}),
  \dotsc, r_{m}(n_{m})) \geq l)\). This experiment is exactly the restriction of
  \(\mathcal{E}_{1}^{\otimes n_{1}} \otimes \dotsb \otimes
  \mathcal{E}_{m}^{\otimes n_{m}}\) to \(\{\theta_{1,0}, \theta_{1,1}\} \times
  \dotsb \times \{\theta_{m,0}, \theta_{m,1}\}\).

  Applying Lemma~\ref{lemma:mixoptimal} it follows that for any probability mass
  function \(p\) on \(\N^{m}\) one has that \(\varphi =
  \bigoplus_{n_{1},\dotsc,n_{m}}\varphi_{n_{1},\dotsc,n_{m}}\) is Bayes with
  respect to the prior \(\pi\) on observing \(\sum_{n_{1}+\dotsc+n_{m}} p(n_{1},
  \dotsc, n_{m})(\mathcal{E}_{1}^{\otimes n_{1}} \otimes \dotsb \otimes
  \mathcal{E}_{m}^{\otimes n_{m}})\) and that the corresponding Bayes risks are
  \begin{equation} \label{eq:finalbayesriskformula}
    \begin{split}
      \sum_{n_{1}, \dotsc, n_{m}}p(n_{1}, \dotsc, n_{m})&\Prob(\PBin(r_{1}(n_{1}), \dotsc, r_{m}(n_{m})) \geq l)
      =\\
      = \Expect(&\Prob(\PBin(r_{1}(N_{1}), \dotsc, r_{m}(N_{m})) \geq l))
    \end{split}
  \end{equation}
  where \((N_{1}, \dotsc, N_{m})\) are distributed according to \(p\). In
  particular this holds for \(p = q\) and \(p = q'\).

  It remains only to recall that the deficiency is bounded from below by any
  difference in achievable Bayes risk, for finitely supported priors and finite
  normalised decision problems. Applying this to
  Equation~\ref{eq:finalbayesriskformula} with \(p = q\) and \(p = q'\) yields
  the result. \qedhere{}
\end{proof}

\begin{proof}[Proof of Lemma~\ref{lemma:pbindiff}]
  Since \(\PBin(p_{1}, \dotsc, p_{n})\) is invariant under permutation of the
  parameters \((p_{1}, \dotsc, p_{n})\) we may assume without loss of generality
  that \(i = n+1\).
  
  Let \(U_{1}, \dotsc, U_{n+1}\) be independent and uniform on \([0, 1]\) and
  \(N = \mathbf{1}(U_{1} < p_{1}) + \dotsb + \mathbf{1}(U_{n} < p_{n}) \sim \PBin(p_{1}, \dotsc, p_{n})\), \(M =
  N + \mathbf{1}(U_{n+1} < p_{n+1}) \sim \PBin(p_{1}, \dotsc, p_{n}, p_{n+1})\), and \(M' = N + \mathbf{1}(U_{n+1} <
  p'_{n+1}) \sim \PBin(p_{1}, \dotsc, p_{n}, p'_{n+1})\). By the law of total probability we have
  \begin{align*}
      \Prob(M \geq l)
      &=
      \Prob(N \geq l-1 \mid U_{n+1} < p'_{n+1})p'_{n+1}
      \\&\quad+
      \Prob(N \geq l-1 \mid p'_{n+1} < U_{n+1} < p_{n+1})(p_{n+1}-p'_{n+1})
      \\&\quad+
      \Prob(N \geq l \mid p_{n+1} < U_{n+1})(1-p_{n+1})
      \\&=
      \Prob(N \geq l-1)p'_{n+1}
      \\&\quad+
      \Prob(N \geq l-1)(p_{n+1}-p'_{n+1})
      \\&\quad+
      \Prob(N \geq l)(1-p_{n+1})
      \intertext{and}
      \Prob(M' \geq l)
      &=
      \Prob(N \geq l-1)p'_{n+1}
      \\&\quad+
      \Prob(N \geq l)(p_{n+1}-p'_{n+1})
      \\&\quad+
      \Prob(N \geq l)(1-p_{n+1}).
  \end{align*}
  Combining the above
  \begin{equation*}
    \begin{split}
      &\Prob(\PBin(p_{1}, \dotsc, p_{n+1}) \geq l)
      -
      \Prob(\PBin(p_{1}, \dotsc, p'_{n+1}) \geq l)
      =
      \\&\quad=
      \Prob(M \geq l)
      -
      \Prob(M' \geq l)
      \\&\quad=
      (\Prob(N \geq l-1)
      -
      \Prob(N \geq l))(p_{n+1}-p'_{n+1})
      \\&\quad=
      \Prob(N = l-1)(p_{n+1}-p'_{n+1}).
    \end{split}
  \end{equation*}
  Since \(N \sim \PBin(p_{1}, \dotsc, p_{n})\) this concludes the proof.
\end{proof}

\begin{proof}[Proof~of~Lemma~\ref{claim:mixedpbin}]
  This follows from a double concentration argument, first showing that the
  (random) mean is concentrated and then using that conditionally on the
  (random) parameters the Poisson-binomial quantity is concentrated around its
  mean.

  For any \emph{fixed} vector \(q_{1}, \dotsc, q_{m}\) we have by Hoeffding's
  inequality for any \(t > 0\) and denoting \(b_{1} = \sqrt{t/m}\) that
  \begin{equation*}
    \Prob(\mu - b_{1} < \PBin(q_{1}, \dotsc, q_{m}) < \mu
    + b_{1}) \geq 1 - 2e^{-2t}
  \end{equation*}
  where \(\mu = \Expect(\PBin(q_{1}, \dotsc, q_{m})) = q_{1} + \dotsb + q_{m}\).

  Consider a random vector \(N = (N_{1}, \dotsc, N_{m}) \sim \Mult(n, p_{1},
  \dotsc, p_{m})\). Any multinomial vector is negatively
  associated\cite{Dubhashi1998}. Since the functions \(f_{1}, \dotsc, f_{m}\)
  are either all decreasing or all increasing we have that the vector
  \((f_{1}(N_{1}), \dotsc, f_{m}(N_{m}))\) is also negatively
  associated\cite[Proposition~8]{Dubhashi1998}. This in turn implies that their
  sum \(M = \sum_{i=1}^{k}f_{i}(N_{i})\) satisfies the standard Hoeffding
  bounds\cite[Proposition~7]{Dubhashi1998}. For any \(s > 0\) denote \(b_{2} =
  \sqrt{s/m}\) so that we have
  \begin{equation*}
    \Prob(\mu' - b_{2} < M < \mu' + b_{2}) \geq 1 - 2e^{-2s}
  \end{equation*}
  where \(\mu' = \Expect(M)\).

  Let \(K \sim \PBin(f_{1}(N_{1}), \dotsc, f_{m}(N_{m}))\) conditional on
  \((N_{1}, \dotsc, N_{m})\). Note that \(\Expect(K \mid N) = M\), \(\Expect(K)
  = \Expect(M) = \mu'\) and that for any event \(A\) we have that
  \(\Expect(\Prob(\PBin(f_{1}(N_{1}), \dotsc, f_{m}(N_{m})) \in A)) = \Prob(K
  \in A)\). Letting \(b = b_{1} + b_{2}\) we have
  \begin{equation*}
    \begin{split}
      &\Prob(\mu' - b < K < \mu' + b) =\\
      &\quad= \Expect(\Prob(\mu' - b_{\phantom{1}} < K < \mu' + b_{\phantom{1}} \mid N))\\
      &\quad\geq \Expect(\Prob(\mu' - b_{\phantom{1}} < K < \mu' + b_{\phantom{1}} \mid N) \mid \mu' - b_{2} < M < \mu' + b_{2})) \\
      &\qquad\times (1 - 2e^{-2s}) \\
      &\quad\geq \Expect(\Prob(\makebox[\widthof{\(\mu' - b_{\phantom{1}} < K < \mu' + b_{\phantom{1}}\)}][c]{\(M - b_{1} < K < M + b_{1}\)} \mid N) \mid \mu' - b_{2} < M < \mu' + b_{2})) \\
      &\qquad\times (1 - 2e^{-2s}) \\
      &\quad\geq (1 - 2e^{-2t})(1 - 2e^{-2s})
    \end{split}
  \end{equation*}
  for any \(s, t > \log(\sqrt{2})\). Taking \(A = \N \cap (\mu' - b, \mu' + b)\)
  gives \(|A| \leq 2(\sqrt{\frac{t}{m}} + \sqrt{\frac{s}{m}})\). There must
  therefore exist some \(a \in A\) such that
  \begin{equation*}
  \Expect(\Prob(\PBin(f_{1}(N_{1}), \dotsc, f_{m}(N_{m})) = a)) \geq
    \frac{(1 - 2e^{-2t})(1 - 2e^{-2s})}{2(\sqrt{t} + \sqrt{s})}
    \frac{1}{\sqrt{m}}.
  \end{equation*}
  Taking \(s = t = 1.6\) yields
  \begin{equation*}
    \Expect(\Prob(\PBin(f_{1}(N_{1}), \dotsc, f_{m}(N_{m}) = a)) \geq \frac{1}{3}
    \frac{1}{\sqrt{m}}.\qedhere
\end{equation*}
\end{proof}

\section*{Acknowledgement}

I would like to thank Silvelyn Zwanzig for critical remarks on an earlier
version of this manuscript as well as Xing Shi Cai for discussions on how to
handle maxima of multinomial vectors.

\printbibliography{}

\appendix{}

\section{Basic facts and definitions}
\label{sec:basicfacts}

A statistical experiment \(\mathcal{E}\) is specified by a set \(\Theta\) called
the parameter space, a measurable space \(X\) that is the sample space of the
experiment, and a \(\Theta\)-indexed family of probability measures
\((P_{\theta})_{\theta \in \Theta}\) on \(X\).

For a measurable space \(X = (\mathcal{X}, \mathcal{A})\) the space
\(\mathcal{P}(X)\) of probability measures on \(X\) is a measurable space with
\(\sigma\)-algebras the coarsest such that for each \(A \in \mathcal{A}\) the
map \(P \mapsto P(A)\) is measurable.

For two measurable spaces \(X\) and \(Y\), any measurable \(f \colon X \to Y\)
lifts to the measurable push-forward \(f^{\ast} \colon \mathcal{P}(X) \to
\mathcal{P}(Y)\) by assigning to \(P \in \mathcal{P}(X)\) the composition \(A
\mapsto P(f^{-1}(A))\). The map has the property that if \(\eta\) is a
\(P\)-distributed \(X\)-valued random variable, then \(f(\eta)\) is a
\(f^{\ast}(P)\)-distributed \(Y\)-valued random variable.

The map \(f^{\ast}\) extends to a positive linear map from the space of signed
measures on \(X\) to the space of signed measures of \(Y\). The assignment is
functorial in the sense that for any pair of composable measurable functions
\(\beta_{1}\) and \(\beta_{2}\) one has \((\beta_{1} \circ \beta_{2})^{\ast} =
\beta_{1}^{\ast} \circ \beta_{2}^{\ast}\) and \(\id^{\ast} = \id\) for any
identity map. In particular, any inverse \(\beta^{-1}\) is sent to the
inverse of \(\beta^{\ast}\).

Given two measurable spaces \(X = (\mathcal{X}, \mathcal{A})\) and \(Y =
(\mathcal{Y}, \mathcal{B})\) a Markov-kernel \(K \colon X \to_{k} Y\) is a
measurable map from \(\mathcal{X}\) to the space of probability measures
\(\mathcal{P}(Y)\). This is equivalent to \(x \mapsto K(x)(B) = K_{x}(B)\) being
\(\mathcal{A}\)-measurable for every \(B \in \mathcal{B}\). For any measure
\(\mu\) on \(X\), define a@article{mccullagh2002statistical,
  title={What is a statistical model?},
  author={McCullagh, Peter},
  journal={The Annals of Statistics},
  volume={30},
  number={5},
  pages={1225--1310},
  year={2002},
  publisher={Institute of Mathematical Statistics}
}
 measure \(K(\mu) = K_{\mu}\) on \(Y\) by \(K_{\mu}(B)
= \int K_{x}(B) \, \mu(dx)\). If \(P \in \mathcal(P)(X)\) then \(K(P) \in
\mathcal{P}(Y)\). We will not distinguish in notation \(K\) from the map it
induces on the space of (probability) measures.

The direct sum (disjoint union) of two spaces \(X\) and \(Y\) is denoted by \(X
\oplus Y\) and has \(\sigma\)-algebra generated by \(\mathcal{A} \cup
\mathcal{B}\) after ensuring \(\mathcal{X}\) and \(\mathcal{Y}\) are disjoint.
For (signed) measures \(\mu\) and \(\nu\) on \(X\) and \(Y\), the (signed)
measure \(\mu \oplus \nu\) is defined by \((\mu \oplus \nu)(A) = \mu(A \cap
\mathcal{X}) + \nu(A \cap \mathcal{Y})\) for any measurable \(A\). Similarly, for
\(\mu\) on \(X\) and \(\nu\) on \(Y\) we denote by \(\mu \otimes \nu\) the
product measure specified by \((\mu \otimes \nu)(A \times B) = \mu(A)\nu(B)\).
The above generalise directly to a larger, possibly infinite, family of spaces
\((X_{i})_{i \in I}\) indexed by some set \(I\). The direct sum is then denoted
\(\bigoplus_{i \in I} X_{i}\) and the product \(\bigotimes_{i \in I} X_{i}\).
For each \(i \in I\) there is an associated injection \(\iota_{i} \colon X_{i}
\to \bigoplus_{i \in I} X_{i}\) such that \(\{ \iota_{i}(X_{i}) \mid i \in I\}\)
is a measurable partition of \(\bigoplus_{i \in I} X_{i}\) and \(\iota_{i}\)
restricts to a bimeasurable bijection onto \(\iota_{i}(X_{i})\).

For spaces \(X, X', Y, Y'\) and measurable maps \(f \colon X \to X'\) and \(g
\colon Y \to Y'\) we denote by \(f \otimes g \colon X \otimes Y \to X' \otimes
Y'\) and \(f \oplus g \colon X \oplus Y \to X' \oplus Y'\) the measurable maps
defined by \((f \otimes g)(x, y) = (f(x), g(y))\), \((f \otimes g)(\iota_{X}(x))
= \iota_{X'}(f(x))\), and \((f \otimes g)(\iota_{Y}(y)) = \iota_{Y'}(g(y))\). If
\(f\) and \(g\) are instead Markov kernels they are defined by \((f \otimes
g)_{(x, y)}(A \times B) = f_{x}(A)g_{y}(B)\) and \((f \oplus
g)_{\iota_{X}(x)}(\iota_{X'}(A)) = f_{x}(A)\) and \((f \oplus
g)_{\iota_{Y}(y)}(\iota_{Y'}(B)) = f_{y}(B)\).

Given any pair of experiments \(\mathcal{E}_{0} = (X, P_{\theta} ; \theta \in
\Theta)\) and \(\mathcal{E}_{1} = (Y, Q_{\theta} ; \theta \in \Theta)\) on the
same parameter space and any \(\alpha_{0} \in [0, 1]\) and denoting \(\alpha_{1}
= 1-\alpha_{0}\) this allows us to define the mixture experiment \(\sum_{i}
\alpha_{i} \mathcal{E}_{i} = (X \oplus Y, \alpha_{0} P_{\theta} \oplus
\alpha_{1} Q_{\theta} ; \theta \in \Theta)\). Similarly, the infinite convex
combination of some family of experiments \((\mathcal{E}_{i})_{i \in I}\) where
\(\mathcal{E}_{i} = (X_{i}, P_{i,\theta} ; \theta \in \Theta)\) with respect to
some coefficients \(p\) given by \(\sum_{i \in I}p_{i}\mathcal{E}_{i} =
(\bigoplus_{i \in I} X_{i}, \sum_{i \in I}\alpha_{i}P_{i,\theta}; \theta \in
\Theta)\). See Torgersen\cite[Chapter~1.3]{Torgersen1991} for more details.

For a parameter space \(\Theta\) a finite normalised decision problem on
\(\Theta\) is a tuple \((A, L)\) of a finite set \(A\) (the action space) and a
non-negative function \(L \colon \Theta \times A \to [0, 1]\). If \(L(\theta, a)
\in \{0, 1\}\) for each \(\theta \in \Theta\) and \(a \in A\) we say that \(L\)
is a 0-1-decision problem. Given a space \(X\) or experiment \(\mathcal{E} = (X,
P_{\theta} ; \theta \in \Theta)\) a decision procedure \(\rho\) for \((A, L)\)
on observing \(X\) (or \(\mathcal{E}\)) is a Markov kernel \(\rho \colon X
\to_{k} \mathcal{D}(A)\). Given a parameter space \(\Theta\) with a decision
problem \((A, L)\), an experiment \(\mathcal{E} = (X, P_{\theta} ; \theta \in
\Theta)\) and a decision procedure \(\rho\) on \(X\) the risk
\(R_{\mathcal{E}}(\rho, \theta)\) of \(\rho\) for \((L, A)\) at \(\theta \in
\Theta\) on observing \(\mathcal{E}\) is the expectation \(P_{\theta}(x \mapsto
L(\rho(x), \theta))\). Given a finitely supported probability measure \(\pi\) on
the parameter space \(\Theta\) the expected risk \(\sum_{\theta} \pi(\{\theta\})
R_{\mathcal{E}}(\rho, \theta)\) is called the Bayes risk of \(\rho\) for \((L,
A)\) with respect to the prior \(\pi\) on observing \(\mathcal{E}\). The infimum
over all \(\rho\) is the Bayes risk for \((L, A)\) with respect to the prior
\(\pi\) on observing \(\mathcal{E}\).

Given two experiments \(\mathcal{E} = (X, P_{\theta} ; \theta \in \Theta)\) and
\(\mathcal{F} = (Y, Q_{\theta} ; \theta \in \Theta)\) on the same parameter
space \(\Theta\) the deficiency \(\delta(\mathcal{E}, \mathcal{F})\) is the
smallest number \(\varepsilon \geq 0\) such that for any finite normalised
decision problem on \(\Theta\) and decision procedure \(\rho\) on \(Y\) there
exists a decision procedure \(\rho'\) on \(X\) such that for all \(\theta \in
\Theta\) one has \(R_{\mathcal{E}}(\rho', \theta) \leq R_{\mathcal{F}}(\rho,
\theta) + \varepsilon\). The deficiency can be characterised in various other
ways, most famously in terms of optimal transitions that map the family
\((P_{\theta})_{\theta \in \Theta}\) approximately onto the family
\((Q_{\theta})_{\theta \in \Theta}\) (see standard references
\cite{LeCam1986,Torgersen1991}). By construction, taking restrictions
\(\mathcal{E}'\) and \(\mathcal{F}'\) of \(\mathcal{E}\) and \(\mathcal{F}\) to
some subset \(\Theta' \subset \Theta\) reduces the deficiency:
\(\delta(\mathcal{E}', \mathcal{F}') \leq \delta(\mathcal{E}, \mathcal{F})\). If \(\delta(\mathcal{F}, \mathcal{E}) = 0\) we say \(\mathcal{E}\) is less
informative than \(\mathcal{F}\) or that \(\mathcal{F}\) is more informative
than \(\mathcal{E}\). If \(\mathcal{E}\) and \(\mathcal{F}\) are both less
informative than the other, they are equivalent and we write
\(\mathcal{E} \cong \mathcal{F}\). For any third experiment \(\mathcal{G}\) the
deficiency satisfies a triangle inequality \(\delta(\mathcal{E}, \mathcal{F})
\leq \delta(\mathcal{E}, \mathcal{G}) + \delta(\mathcal{G}, \mathcal{F})\). Being more/less informative is a transitive relation and
being equivalent is an equivalence relation and if \(\mathcal{E} \cong
\mathcal{F}\) then \(\delta(\mathcal{E}, \mathcal{G}) = \delta(\mathcal{F},
\mathcal{G})\) and \(\delta(\mathcal{G}, \mathcal{E}) = \delta(\mathcal{G},
\mathcal{F})\).

\end{document}